\newcommand{\f}{\frac}
\newcommand{\PP}{\mathbb{P}}
\newcommand{\EE}{\mathcal{E}}
\renewcommand{\P}{\mathbf{P}}
\DeclareMathOperator{\Bin}{Bin}
\newcommand{\0}{\rho}
\renewcommand{\b}{\mathfrak b}
\newcommand{\T}{\mathbb{T}}
\newcommand{\ind}[1]{\mathbf{1}{\{ #1 \}}}
\newcommand{\HOX}[1]{\marginpar{\footnotesize #1}}
\newcommand{\E}{\mathbf E}
\newcommand{\R}{\mathcal R}
\renewcommand{\S}{\mathcal S}
\newcommand{\M}{\mathcal M}
\newcommand{\G}{\mathcal G}
\newcommand{\HH}{\mathcal H}
\newcommand{\D}{\mathcal D}
\begin{document}



\section{Introduction}

Parasite-host models in which parasite expansion is restricted to the sites occupied by their hosts were introduced by ecologists Keeling, Rand, and Wilson \cite{keeling1995ecology, rand1995invasion}. The same dynamics have been reinterpreted in a variety of applications: predator-prey systems, rumor scotching, infection spread, and malware repair in a device network \cite{bordenave2014extinction, rumor, de2015process, gilbert}. Despite  it being natural to model these applications on finite networks, there are few rigorous results to this end. The goal of this present work is to prove that there is a phase transition for species proliferation on sparse random graphs generated from the configuration model. 

We begin by defining \emph{chase-escape} in general. Let $G'$ be a graph with root $\0$. We obtain $G$ by attaching an additional vertex $\b$ to $\0$. Vertices of $G$ are in states $\{r,b,w\}$.  Pairs of adjacent vertices in states $(b,r)$ transition to $(b,b)$ according to a rate-$1$ Poisson process. Pairs of adjacent vertices in states $(r,w)$ transition to $(r,r)$ according to a rate-$\lambda$ Poisson process with $\lambda>0$. 
The initial configuration has $\0$ in state $r$, $\b$ in state $b$, and all other vertices in state $w$. 
We will interchangeably refer to the vertices in states $r,b,$ and $w$ as either red, blue, and white vertices, or as being occupied by red, blue or white particles. One interpretation of these dynamics is of red (prey) being chased and consumed by blue (predators). It is from this interpretation that the chase-escape process earns its name.

The process {fixates} if at some time there are no more red vertices. Let $\R$ be the set of sites that are at some time red and $| \R |$ denote the number of vertices in $\R$.
When $G$ is an infinite graph, we define the critical value
\begin{align}
    \lambda_c(G) = \sup \{ \lambda \colon \P_\lambda(|\R| = \infty) = 0 \} \label{eq:lc}
\end{align}
as the fastest spreading rate at which red almost surely occupies only finitely many vertices of $G$. 

Increasing $\lambda$ allows for more red expansion, but more expansion makes more sites available for blue to chase red along. These offsetting factors make it difficult to couple systems with different values of $\lambda$. For example, for any non-trivial graph with cycles, it is not known that $\P_\lambda(|\R|= \infty)$ increases in $\lambda$. 

A standing conjecture, informally attributed to Martin, is that $\lambda_c(\mathbb Z^d)<1$ for $d \geq 2$.  This was supported by simulation evidence from Tang, Kordzakhia, and Laller who predicted that $\lambda_c(\mathbb Z^2) \approx 1/2$ \cite{tang2018phase}. A more thorough simulation study from Kumar, Grassberger, and Dhar provided convincing evidence that $\lambda_c(\mathbb Z^2) =0.49451 \pm 0.00001$ \cite{kumar2021chase}. The authors gave further quantitative evidence which suggested that the critical exponents for chase-escape fall in the same universality class as those of undirected bond percolation. 

Kordzakhia proved in \cite{tree1} that, when $G = \T_d$ is the infinite rooted $d$-ary tree in which each vertex has $d$ child vertices, we have
\begin{align}
\lambda_c(\mathbb T_d) = 2d -1 - 2\sqrt{d(d-1)} \sim \f 1 {4d}.\label{eq:tree}\end{align}
This comes from balancing the exponential growth of paths in the trees against the large-deviation event that red is able to survive along a fixed path (see \cite{DJT} for a simple argument). 
Note that \eqref{eq:tree} is strictly less than $1$ for $d>1$. Later, Bordenave generalized the formula at \eqref{eq:tree} to trees with arbitrary branching number  \cite{bordenave2014extinction}, which was further refined by Kortchemski \cite{tree_chase}. 

Beckman, Cook, Eikmeier, Junge, and Hernandez-Torres studied a variant of chase-escape on $\mathbb T_d$ in which red particles die at some rate \cite{ced}. Durrett, Junge, and Tang in \cite{DJT} proved that $\lambda_c(\mathbb T_d)\leq \lambda_c(G)$ for any graph $G$ with no more than $d^n$  self-avoiding paths of length $n$ starting from the root. They also proved that $\lambda_c(G) = 1$ when $G$ is the ladder graph $\mathbb Z \times \{ 0,1\}$ and that red may reach infinitely sites even when it spreads at a slower rate than blue in a modified version of chase-escape on the oriented two-dimensional lattice with spreading rates that resemble Bernoulli bond percolation. 
Interpreting chase-escape dynamics as malware spread/repair in a device network, Hinsen, Jahnel, Cali, and Wary demonstrated a phase transition for coexistence on Gilbert graphs \cite{gilbert} with a positive density of vertices blue at the onset. 

We study chase-escape on sequences of graphs $\G = (G_n)_{n \geq 1}$ for which $G_n$ has vertex set $\{\b, 1, \hdots, n\}$. We identify $1$ as the root and assume one edge is present between $\b$ and $1$. Let $\R_n$ be the set of sites occupied by red at some time in chase-escape on $G_n$ with $\b$ initially blue and $1$ initially red.  Define \begin{align}
    \lambda_c(\G) = \sup \{ \lambda \colon \inf_{\delta >0} \limsup_{n \to \infty} \P_\lambda(|\R_n| > \delta n ) = 0\}. \label{eq:lcf}
\end{align}
In words, $\lambda_c(\G)$ is the largest value such that, if $\lambda < \lambda_c(\G)$, the probability that red occupies a fixed fraction of the vertices of infinitely many of the $G_n$ goes to zero. Since $\P_\lambda( |\R_n| > \delta n)$ is not known to be monotonic in $\lambda$, it is an open problem to prove that the definition at \eqref{eq:lcf} is equivalent to the infimum formulation
\begin{align}
\lambda_c'(\G) = \inf \{ \lambda \colon \exists \delta, \epsilon >0 \text{ such that } \limsup_{n \to \infty} \P_\lambda(|\R_n| > \delta n) > \epsilon\} \label{eq:lcf'}.
\end{align} 
Nonetheless, by definition, we have $\lambda_c(\G) \leq \lambda_c'(\G)$. 

 The scaling limit of chase-escape on the complete graph, known as the birth-and-assassination process \cite{aldous}, was studied by Bordenave in \cite{rumor}. Kortchemski later showed that the number of remaining white sites when the process fixates is $O(n^{1- \lambda})$ for $\lambda \in (0,1)$ and $O(1)$ for $\lambda\geq 1$. Thus, red reaches most, if not all, of the vertices of the complete graph for any $\lambda >0$ \cite{complete}. Arruda, Lebensztayn, Rodrigues, and Rodr\'iguez provided simulation evidence in support of the conjecture that similar behavior occurs when $G_n$ is a dense Erd\H{o}s-R\'enyi graph \cite{de2015process}. 
 
 The \emph{two-type Richardson growth model} is closely related to chase-escape. In this process red and blue compete for territory on a graph with blue spreading at rate-1 and red at rate-$\lambda$. Both colors spread only to white vertices and, upon reaching a white site, occupy that site for all subsequent time. The process has implications for geodesics in first passage percolation on $\mathbb Z^d$ \cite{hoffman, pemantle}. Antunovi\'c, Dekel,  Mossel, and Peres studied this process on random regular graphs and proved that both red and blue occupy a positive fraction of the vertices with positive probability only when $\lambda =1$. Deijfen and Van der Hofstad proved that one species dominates for the process on the configuration model with $P(D > x) \approx x^{-\tau + 1}$ with $\tau \in (2,3)$ \cite{winner}. There has been a recent study of a somewhat similar competing two-type growth processes with conversion by Finn and Stauffer \cite{finn2020nonequilibrium} as well as a result by Candellero and Stauffer \cite{candellero2021passage} that a related process lacks a seemingly intuitive monotonicity property.

\subsection{Results}

 We are interested in $\lambda_c(\G)$ when $\G$ consists of random (multi-)graphs sampled from the configuration model with an independent and identically distributed degree sequence. Let $D$ be a random variable supported on the nonnegative integers and define $\mathcal D = (D_1,D_2,\hdots)$ to be a sequence of independent random variables with the same distribution as $D$. We sample a graph $G_n$ on the vertices $1,\hdots, n$ by first assigning $D_i$ half-edges to each vertex $i$ ($\b$ receives no half-edges). 
 If $\sum_{i=1}^n D_i$ is odd, then we add a half-edge to the vertex $n$ so that there is an even number of half-edges.  A \emph{matching}, that pairs each half-edge with another distinct half-edge and only uses each half-edge once, is chosen uniformly at random from the set of all such matchings. $G_n$ is obtained by including an edge for each pair of matched half-edges. Note that this construction allows for the possibility of multiple- and self-edges in $G_n$.
 We identify the root $\0$ with the vertex $1$ and point out that $\deg 1 = D_1 + 1$ since the blue vertex $\b$ is attached. The presence of the extra edge at $1$ and possibly at $n$ are minor nuisances that have no significant impact as $n \to \infty$.

 Denote the joint measure for chase-escape on $\G$ by $\P_\lambda(\cdot)$ and the expectation with respect to this measure by $\E_\lambda[\cdot]$. For the probability measure and expectation of generic random variables such as the degrees $D, D_1,D_2,\hdots$ we use $P(\cdot)$ and $E[\cdot]$.  We will say that a sequence of events occurs \emph{with high probability} if the probability converges to 1. 


To have a phase transition for chase-escape on $\G$, the graphs $G_n$ must contain giant components, i.e., connected components with a non-vanishing fraction of the vertices. To this end, we assume that
\begin{align}
0< E[D (D -2)] < \infty \label{eq:mr} .
\end{align}
This is a variant of the Molloy-Reed criterion for the emergence of a giant component and was first observed in  \cite{molloy1995critical}. Even with a giant component, proving that red can reach a positive fraction of that component remains a complicated question. That red is able to do so for large enough $\lambda$ and fails for small $\lambda$ is our first result.

Before stating it, we introduce two important quantities:   
\begin{align}\label{eq:Lambda}
a &= (E [D^2] /E [D]) -1\\
\Lambda&:= \Lambda(a) = 2a -1 - 2 \sqrt{ a^2 - a} \sim \f 1 { 4a}
\end{align}
with the convention that $\Lambda(\infty) = 0$. 
Graphs sampled from the configuration model are known to be tree-like with branching number $a$ \cite{bollobas2015old}.
So, the form of $\Lambda$ at \eqref{eq:Lambda} matches that of the critical value for infinite trees with a given branching number at \eqref{eq:tree} from \cite{bordenave2014extinction}. For this reason, we make the following conjecture.
\begin{conjecture} \thlabel{conj:lc}
Let $\lambda_c(\G)$ be as defined at \eqref{eq:lcf} and $\Lambda$ as defined at \eqref{eq:Lambda}. Suppose that $\G$ is sampled from the configuration model with an independent $D$-distributed degree sequence satisfying \eqref{eq:mr}. We conjecture that $\lambda_c(\G) = \Lambda$.
\end{conjecture}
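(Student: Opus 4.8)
We would prove \thref{conj:lc} by establishing the two matching bounds $\lambda_c(\G)\ge\Lambda$ and $\lambda_c(\G)\le\Lambda$. Both reduce to two-sided control of the first moment $\E_\lambda[|\R_n|]$: Markov's inequality turns $\E_\lambda[|\R_n|]=O(1)$ into $\lambda\le\lambda_c(\G)$, and, since $|\R_n|\le n$, a bound $\E_\lambda[|\R_n|]\ge cn$ holding for infinitely many $n$ turns into $\limsup_n\P_\lambda(|\R_n|>\tfrac{c}{2}n)\ge\tfrac{c}{2}>0$, hence $\lambda\ge\lambda_c(\G)$. So the target is to pin down $\E_\lambda[|\R_n|]$ on both sides of $\Lambda$, the lower estimate being the easy half and the matching upper estimate the crux.

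\textbf{The lower bound $\lambda_c(\G)\ge\Lambda$.}
Fix $\lambda<\Lambda$. A vertex $v\in\R_n$ only if some self-avoiding path $\0=v_0,\dots,v_k=v$ is ``won by red,'' meaning red reaches $v_i$ strictly before blue reaches $v_{i-1}$ for every $i$; on a tree this is equivalent, along the fixed path, to the partial sums of i.i.d.\ $\Exp(\lambda)$ increments (red advancing) staying below those of i.i.d.\ $\Exp(1)$ increments (blue advancing). By Cramér's theorem this race has probability $\asymp q(\lambda)^{k}$ with $q(\lambda)=4\lambda/(1+\lambda)^{2}$, and one checks that $a\,q(\lambda)=1$ exactly at $\lambda=\Lambda$ — this is the origin of the formula $\Lambda(a)=2a-1-2\sqrt{a^{2}-a}$ in \eqref{eq:Lambda}, matching \eqref{eq:tree}. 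Since the expected number of length-$k$ self-avoiding paths from $\0$ in $G_n$ is $O(a^{k})$ (the exploration from $\0$ being dominated by the edge-biased Galton--Watson tree of mean offspring $a$), a union bound over such paths, after a routine truncation at depth $C\log n$, gives $\E_\lambda[|\R_n|]=O\big(\sum_{k\ge0}(a\,q(\lambda))^{k}\big)=O(1)$ for $\lambda<\Lambda$, and the lower bound follows. The same input shows $\E_\lambda[|\R_n|]\to\infty$ for $\lambda>\Lambda$, but this is far weaker than the linear growth needed above.

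\textbf{The upper bound $\lambda_c(\G)\le\Lambda$.}
Fix $\lambda>\Lambda$. The first, standard step is local: by local weak convergence the $R$-ball around $\0$ in $G_n$ converges to the $R$-ball of the edge-biased Galton--Watson tree $T$ of mean offspring $a$, and chase-escape stopped when red first leaves that ball depends only on the ball together with $\b$; on $T$, since $a$ exceeds the critical branching number, red survives with probability $\theta=\theta(\lambda)>0$ \cite{bordenave2014extinction}, and because $\{\text{red survives}\}=\bigcap_{R}\{\text{red reaches depth }R\}$ we get $\liminf_n\P_\lambda(\text{red reaches distance }R)\ge\theta-o_R(1)$ for every $R$. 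The substantive step is to upgrade this to $\E_\lambda[|\R_n|]=\Omega(n)$. We would argue that, conditionally on red surviving into the still tree-like region out to depth $L\asymp\log n$, red typically occupies $n^{\gamma}$ vertices there, for some $\gamma>0$ (its per-generation population grows like $(a\,q(\lambda))^{k}$), while carrying a ``lead'' over blue of order $L$ — the latter because along a surviving lineage the red--blue gap is a random walk conditioned to stay positive, hence of linear size in the depth; this needs a conditional law of large numbers for the relevant (multi-type) additive martingale on $T$. From such a broad, far-ahead frontier, each frontier vertex seeds an essentially unconstrained red spread — supercritical, since $a>1$ by \eqref{eq:mr} (cf.\ \cite{molloy1995critical}), and eroded by blue only from behind — and since the free time before blue catches up is of order $\log n$, comparable to the diameter of the giant, a concentration argument should force red onto a positive fraction of the giant.

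\textbf{Main obstacle.}
Everything up to the local step is routine; the difficulty lies entirely in the bulk phase, where $G_n$ is no longer tree-like and three features conspire against us. First, the lead red holds over blue is only $\Theta(\log n)$, of the same order as distances in the graph, so whether red finishes covering the giant before blue overtakes it is a genuinely close race turning on constants. Second, the merging of paths, which intuitively only helps red (it suffices that one path to $v$ be won), resists a soft comparison argument because $\P_\lambda(|\R_n|>\delta n)$ is not known to be monotone in $\lambda$. Third, one needs sharp concentration for the red--blue lead — a quantity that is only logarithmically large — while red and blue both spread at exponential rates. We expect resolving this to require either a second-moment estimate controlling the dependence among the events $\{v\in\R_n\}$ for typical bulk vertices $v$, or a fluid-limit/differential-equation analysis of the joint evolution of the red, white, and blue densities in the bulk, together with a drift estimate exhibiting the lead as a supermartingale bounded away from $0$. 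This is exactly the gap between the implicit upper bound proved in this paper and the conjectured value $\Lambda$.
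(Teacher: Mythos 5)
The statement you are addressing is stated in the paper as a conjecture, and the paper does not prove it: it only establishes $\Lambda \leq \lambda_c(\G) < \infty$ (\thref{thm:main}), with the upper half being an implicit, large-$\lambda$ bound obtained by a completely different route (percolation of ``open'' vertices and the giant-component criterion of \cite{JPRR18}), not the sharp bound $\lambda_c(\G)\le\Lambda$. Your first half is fine and is essentially the paper's argument for the lower bound: survival of red along a fixed path of length $k$ has probability of order $\bigl(4\lambda/(1+\lambda)^2\bigr)^k$ (this is \cite[Lemma 2.2]{DJT}, used at \eqref{eq:Ak}), the expected number of self-avoiding paths of length $k$ from the root is at most $E[D]a^{k-1}$ by the size-biased branching-process domination, and $4a\Lambda/(1+\Lambda)^2=1$; summing gives $\sup_n \E_\lambda[|\R_n|]<\infty$ (no truncation at depth $C\log n$ is even needed), and Markov's inequality then yields $\lambda_c(\G)\ge\Lambda$ exactly as in the paper's proof of \eqref{eq:finE}. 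Likewise your remark that $\E_\lambda[|\R_n|]\to\infty$ for $\lambda>\Lambda$ is the content of \eqref{eq:ERl}, proved in the paper via the local tree coupling and \cite{bordenave2014extinction}, and, as you note, it is far too weak to give a linear-order range.

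The genuine gap is the entire second half, $\lambda_c(\G)\le\Lambda$, and your own ``main obstacle'' paragraph correctly identifies that it is not proved. Concretely: (i) the claim that, conditionally on survival in the tree-like region, red carries a lead over blue of order $L$ along surviving lineages (``a random walk conditioned to stay positive'') is asserted, not proved, and the needed conditional law of large numbers for the associated additive functional on the random tree is exactly the kind of estimate that is missing from the literature; (ii) even granting such a lead, the step from ``red occupies $n^{\gamma}$ vertices at depth $L\asymp\log n$ with a logarithmic head start'' to ``red occupies $\delta n$ vertices of $G_n$'' has no argument at all --- in the bulk the graph is no longer tree-like, the head start is of the same order as the diameter, and because $\P_\lambda(|\R_n|>\delta n)$ is not known to be monotone in $\lambda$ (and path-merging cannot be handled by a soft coupling), there is no comparison principle to fall back on. A second-moment or fluid-limit analysis is proposed but not carried out. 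So what you have is a correct reproduction of the paper's lower bound together with a plausible program for the conjectured matching upper bound; it does not constitute a proof of \thref{conj:lc}, which remains open in the paper as well.
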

This is discussed more in Section \ref{sec:future}. Our first result is a lower bound consistent with \thref{conj:lc} and a proof that $\lambda_c(\G)$ is finite.

\begin{theorem} \thlabel{thm:main}
For chase-escape with the same conditions as \thref{conj:lc} it holds that
$$\Lambda  \leq \lambda_c(\mathcal G) < \infty.$$ 
\end{theorem}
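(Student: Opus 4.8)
The plan is to establish the two inequalities separately, with very different tools.

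\textbf{Lower bound $\Lambda \le \lambda_c(\G)$.} The idea is to dominate the red cluster in chase-escape from above by a branching-process-type object built on the local weak limit of the configuration model, and to leverage the known tree result \eqref{eq:tree} from \cite{bordenave2014extinction}. Concretely, I would first recall that the configuration model with i.i.d.\ $D$-distributed degrees converges locally (in the Benjamini--Schramm sense) to a unimodular Galton--Watson tree: the root has offspring distribution $D$, and every subsequent vertex has a number of further children distributed as the size-biased shift $\hat D - 1$, whose mean is exactly $a = E[D^2]/E[D] - 1$. Next, for a fixed $\lambda < \Lambda = \Lambda(a)$, choose $R$ large enough that $\lambda < \Lambda(a_R)$ still holds, where $a_R$ is a slightly inflated branching number accounting for the discrepancy between the finite-$n$ graph and its limit out to radius-$R$ neighborhoods. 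The key monotonicity-type input is that red can only occupy a vertex $v$ along some self-avoiding path from the root to $v$ on which red wins every successive race against blue; the number of such paths of length $k$ is controlled (in expectation, and with high probability on most of the graph) by the branching number, and the probability that red survives a fixed path of length $k$ decays exponentially at the rate dictated by the large-deviation computation behind \eqref{eq:tree}. Summing over $k$ gives, for $\lambda < \Lambda$, that $\E_\lambda[|\R_n \cap B(1,R\log n)|]$ is bounded, while an exploration/stopping argument shows red cannot escape a ball of radius $\asymp \log n$ with non-vanishing probability, so $\P_\lambda(|\R_n| > \delta n) \to 0$ for every $\delta>0$. The main obstacle here is handling the $O(1/n)$ fraction of vertices with atypically large degree and the short cycles: these must be excised by a truncation argument (conditioning on the degree sequence and on the absence of dense spots in logarithmic neighborhoods), and one must be careful that the extra edge at $\b$ and possibly at $n$ do not affect the count.

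\textbf{Upper bound $\lambda_c(\G) < \infty$.} Here the strategy is constructive: show that for $\lambda$ sufficiently large, red alone (ignoring blue) would flood the giant component quickly, and that blue, starting from the single vertex $\b$, is simply too slow to catch up to more than an $o(n)$ prefix of red's advance. First, under \eqref{eq:mr} the graph $G_n$ has a giant component of size $\Theta(n)$ with high probability, and from the root $1$ (conditioned to lie in the giant, which happens with probability bounded away from $0$; one then also needs that this positive-probability event can be upgraded, or one works on the event directly) the first-passage exploration by red alone reaches $\delta n$ vertices within time $O(\log n)$ when $\lambda$ is large, since red's exploration dominates a supercritical branching random walk with mean offspring $a>1$ and fast clock. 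Meanwhile blue, chasing along the red trail, advances through the sequence of edges red has traversed at rate $1$; the number of blue infections by time $t$ is stochastically at most a rate-$1$ Yule-type process, so in time $O(\log n)$ blue occupies at most $n^{o(1)}$ vertices. Taking $\lambda$ large enough that red's $O(\log n)$-time footprint exceeds $\delta n$ while blue's stays $o(n)$ yields $\liminf_n \P_\lambda(|\R_n| > \delta n) > 0$ — in fact one can arrange the probability to be bounded below — which by \eqref{eq:lcf} forces $\lambda_c(\G) \le \lambda < \infty$. The delicate point is that red's spread is not literally a branching random walk because of cycles and because blue removes the leading red front; the fix is to run the coupling only until red has accumulated $\delta n$ vertices and to show blue's interference is negligible on that timescale, using that each blue step requires waiting an Exp$(1)$ time behind a red step that took only Exp$(\lambda)$ with $\lambda \gg 1$, so blue falls exponentially behind.

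I expect the \emph{lower bound} to be the harder half: the tree heuristic is clean, but converting ``expected red cluster is small on the tree'' into ``red occupies $o(n)$ vertices with high probability on the finite random graph'' requires a genuine second-moment or exploration argument that survives the lack of monotonicity in $\lambda$ and the presence of rare high-degree vertices. In particular, one cannot simply couple to an infinite tree globally — only to depth $O(\log n)$ — and one must argue that red escaping such a neighborhood to reach a linear number of vertices is itself a low-probability event, which is where the surgery on the degree sequence (a truncation $D \wedge K$ with $K = K(n) \to \infty$ slowly) and the finite-second-moment hypothesis \eqref{eq:mr} do the real work.
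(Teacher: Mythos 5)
Your \textbf{lower bound} uses the right core inputs (the number of self-avoiding paths of length $k$ from the root is controlled in expectation by $E[D]\,a^{k-1}$ via the size-biased branching process, and the probability that red survives along a fixed path decays like $(4\lambda/(1+\lambda)^2)^k$ up to polynomial corrections), but your plan routes these through a depth-$O(\log n)$ local coupling, a degree truncation, and a separate ``exploration/stopping argument that red cannot escape a ball of radius $\asymp \log n$'' --- and that last step is exactly the part you leave unproved. It is also unnecessary: the paper bounds $\E_\lambda[|\R_n|]$ \emph{globally}, writing $|\R_n|\le\sum_{k=0}^n\sum_{\gamma\in\Gamma_k}\ind{\text{red survives on }\gamma}$, using independence of $\Gamma_k$ from the survival indicators, and summing $\E[|\Gamma_k|]\P_\lambda(A_k)$ to get a constant $C$ uniform in $n$ for all $\lambda\le\Lambda$ (this is exactly \eqref{eq:finE}); Markov's inequality then gives $\P_\lambda(|\R_n|>\delta n)\le C/(\delta n)\to 0$, so $\lambda_c(\G)\ge\Lambda$ with no coupling, no truncation, and no escape argument. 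So this half is fixable, but as written it has a hole in the step you yourself flag as ``the real work.''

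The \textbf{upper bound} is where the genuine gap lies, and your strategy is not the paper's. Two problems. First, the quantitative claim that blue occupies at most $n^{o(1)}$ vertices in time $O(\log n)$ because it is ``stochastically at most a rate-$1$ Yule-type process'' is false: blue spreads as rate-$1$ first-passage percolation on the red trail, so each blue vertex infects at a rate proportional to its degree, and a ball of temporal radius $t$ has size of order $e^{\Theta((a-1)t)}$; with $t\asymp\log n/\lambda$ this is $n^{\Theta(1/\lambda)}$, which is still $o(n)$ for large $\lambda$ but not via your mechanism, and getting matching upper bounds on blue and lower bounds on red's flood time is a substantial FPP argument you have not supplied. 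Second, and more seriously, ``run red alone and subtract blue's footprint'' is not a valid coupling: when blue consumes a red vertex before that vertex has passed the infection to all of its neighbors, every vertex that red would only have reached through it is lost, so red's actual range is not (red-alone range) minus (blue's range); since $\P_\lambda(|\R_n|>\delta n)$ is not known to be monotone in $\lambda$ and there is no stochastic domination between the constrained and unconstrained dynamics, ``blue's interference is negligible on that timescale'' is precisely the statement that needs proof, not a fix. The paper circumvents this entirely: it calls a vertex $v$ \emph{open} if the maximum of its outgoing red passage times beats the minimum of its incoming blue passage times; openness is independent across vertices, red provably occupies every vertex of the open component containing the root, the open subgraph $H_n$ is again a configuration model on a damaged degree sequence (Lemma \ref{lem:law}), and for $\lambda$ large a giant component of $H_n$ is extracted via the Joos--Perarnau--Rautenbach--Reed criterion after carefully controlling how many half-edges the closed vertices remove (Lemmas \ref{lem:En}--\ref{lem:hatj}, Proposition \ref{prop:Hn}). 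Your sketch contains no substitute for this localization of the red-versus-blue race into independent single-vertex events, which is the key idea making the upper bound rigorous.
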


 Our second result is that a phase transition for the expected number of sites reached by red occurs at $\Lambda$. 
 
\begin{theorem}\thlabel{thm:ER}
Consider chase-escape with the same conditions as \thref{conj:lc}. If $\lambda \leq \Lambda$, then there exists a constant $C:=C(\Lambda)$ independent of $\lambda$ such that
\begin{align}
\sup_{n \geq 1} \E_\lambda [|\R_n|] < C.\label{eq:finE}
\end{align}
If $\lambda >\Lambda$, then 
\begin{align}\lim_{n \to \infty} \E_\lambda [|\R_n|] = \infty.\label{eq:ERl}
\end{align}
If $E[D^2] = \infty$ and $E[D]<\infty$, then this result continues to hold with $\Lambda = 0$. 
\end{theorem}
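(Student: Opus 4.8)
The plan is to reduce both statements to first-moment estimates that exploit the local tree-likeness of the configuration model, and then to feed in a random-walk computation. Let $X_1,X_2,\dots$ be i.i.d.\ $\Exp(\lambda)$, let $Y_1,Y_2,\dots$ be i.i.d.\ $\Exp(1)$, all independent, put $Z_i=X_i-Y_i$ and $S_j=Z_1+\cdots+Z_j$, and let
\[
p_k(\lambda)=P\bigl(S_j<0 \text{ for all } 1\le j\le k\bigr)
\]
be the probability that red wins the race along a fixed path of length $k$ (the $X_i$ are the red-spread clocks along the path and the $Y_i$ the blue clocks, starting from the edge $\b\0$). First I would record two bounds: an argument along red infection paths as in \cite{DJT} gives the upper bound $\E_\lambda[|\R_n|]\le\sum_{k\ge0}\E[N_k]\,p_k(\lambda)$, where $N_k$ is the number of length-$k$ self-avoiding paths from $\0$ in $G_n$; and a coupling with chase-escape on the finite tree $B_k(\0)$, valid on the event that this radius-$k$ ball is a tree, gives the matching lower bound $\E_\lambda[|\R_n|]\ge p_k(\lambda)\,\E\bigl[\ind{B_k(\0)\text{ is a tree}}\,|\partial B_k(\0)|\bigr]$ for every $k$, with $\partial B_k(\0)$ the set of vertices at distance exactly $k$ from $\0$. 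The branching structure of the configuration model then supplies a constant $C$ with $\E[N_k]\le Ca^k$, and a second-moment/path-counting estimate valid when $a^{2k}=o(n)$ gives both $\E|\partial B_k(\0)|\ge c\,a^k$ and $\E\bigl[\ind{B_k(\0)\text{ is not a tree}}\,|\partial B_k(\0)|\bigr]=o(a^k)$.

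For \eqref{eq:finE}, observe that when $\lambda<1$ the walk $S_j$ has positive drift $\lambda^{-1}-1$, so a Chernoff bound at the mean-zero exponential tilt gives $p_k(\lambda)\le\theta(\lambda)^k$ with $\theta(\lambda):=\min_tE[e^{tZ}]=4\lambda/(1+\lambda)^2$. Since $\Lambda$ is precisely the smaller root of $4a\lambda=(1+\lambda)^2$, one has $a\,\theta(\lambda)\le1$ for $\lambda\le\Lambda$, and hence for $\lambda<\Lambda$ the series $\sum_k\E[N_k]p_k(\lambda)\le C\sum_k(a\,\theta(\lambda))^k$ converges, uniformly in $n$. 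At $\lambda=\Lambda$ one has $a\,\theta(\Lambda)=1$ exactly, so this crude bound is not enough, and the crux becomes the sharper estimate $p_k(\Lambda)\le C'\theta(\Lambda)^k r_k$ with $\sum_k r_k<\infty$: the extra decay comes from the fact that the trajectory must stay \emph{below} $0$ throughout, not merely end below it. Tilting to the mean-zero law—where a centered walk is negative on $\{1,\dots,k\}$ with probability $\Theta(k^{-1/2})$—and then paying the exponential discount $e^{-t^* S_k}$ (with $t^*<0$ the tilt parameter) should produce $r_k=O(k^{-3/2})$, or at worst a stretched-exponential bound; either is summable. Since $p_k(\lambda)$ is nondecreasing in $\lambda$, this bounds $\sup_n\E_\lambda[|\R_n|]$ by a constant depending only on $\Lambda$.

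For \eqref{eq:ERl}, fix $\lambda>\Lambda$ and split on whether $\lambda\le1$. If $\Lambda<\lambda\le1$ then $a\,\theta(\lambda)>1$ (since $\theta$ is increasing on $(0,1)$ and $a\,\theta(\Lambda)=1$), and the large-deviation lower bound $p_k(\lambda)\ge\theta(\lambda)^k e^{-o(k)}$, combined with the lower bound above at $k=k(n)\to\infty$ with $a^{2k(n)}=o(n)$, yields $\E_\lambda[|\R_n|]\ge c\,(a\,\theta(\lambda))^{k(n)}e^{-o(k(n))}\to\infty$. If $\lambda>1$ the walk has nonpositive drift, so $p_k(\lambda)\ge c(\lambda):=P(S_j<0\ \forall\, j\ge1)>0$ for every $k$, and the same inequality gives $\E_\lambda[|\R_n|]\ge c\,c(\lambda)\,a^{k(n)}\to\infty$. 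Finally, for the regime $E[D^2]=\infty$, $E[D]<\infty$ (so $\Lambda=0$), I would argue separately and directly, for every $\lambda>0$: until the blue vertex conquers $\0$—which happens at an $\Exp(1)$ time $T$ independent of everything else—no vertex other than $\b$ is blue, so during $[0,T)$ red spreads exactly as first-passage percolation from $\0$ with i.i.d.\ $\Exp(\lambda)$ edge weights; hence $\R_n\supseteq\{v:d^{\mathrm{FPP}}(\0,v)<T\}$, and since a vertex at graph distance $2$ from $\0$ lies within FPP-distance $\mathrm{Gamma}(2,\lambda)$ along a two-step path,
\[
\E_\lambda[|\R_n|]\ \ge\ \Bigl(\tfrac{\lambda}{1+\lambda}\Bigr)^{2}\,\E\bigl[\,|\partial B_2(\0)|\,\bigr].
\]
It then remains to note that $\E|\partial B_2(\0)|\to\infty$ when $E[D^2]=\infty$ and $E[D]<\infty$, which holds because the mean empirical size-biased degree $\E\bigl[(\sum_j D_j^2)/(\sum_j D_j)\bigr]$ diverges.

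The step I expect to be the main obstacle is the critical random-walk bound $p_k(\Lambda)\le C'\theta(\Lambda)^k r_k$ with $\sum_k r_k<\infty$. Away from criticality everything collapses to convergent geometric series, but at $\lambda=\Lambda$ one genuinely needs the sub-exponential correction generated by the ``stay below $0$'' constraint—obtainable either from an exact combinatorial identity for $p_k$ afforded by the memoryless clocks, or from careful ballot-type and persistence asymptotics for the walk $S_j$. The configuration-model path counts and the first-passage-percolation coupling are by comparison routine.
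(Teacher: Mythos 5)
Your upper-bound argument is essentially the paper's: a first-moment bound over self-avoiding paths, $\E_\lambda[|\R_n|]\le\sum_k \E_\lambda[|\Gamma_k|]\,\P_\lambda(A_k)$ with $\E_\lambda[|\Gamma_k|]\le E[D]a^{k-1}$, together with the survival estimate $\P_\lambda(A_k)\le C_\lambda\,(4\lambda/(1+\lambda)^2)^k k^{-3/2}$. The step you flag as the main obstacle (the summable polynomial correction at $\lambda=\Lambda$) is exactly \cite[Lemma 2.2]{DJT}, which the paper simply cites; your ballot/tilting heuristic is the right mechanism, so this half is sound.

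The genuine gap is in the lower bound for $\Lambda<\lambda$ with $a<\infty$, namely the inequality $\E_\lambda[|\R_n|]\ge p_k(\lambda)\,\E\bigl[\ind{B_k(\0)\text{ is a tree}}\,|\partial B_k(\0)|\bigr]$, which you justify by a ``coupling with chase-escape on the finite tree $B_k(\0)$.'' That domination does not hold: chase-escape has no monotonicity under adding the rest of the graph. On the event that the radius-$k$ ball is a tree, it is true that blue's only route to a path vertex \emph{inside the ball} is along that path, but once red exits the ball through some other branch, blue can follow it outside, travel through red territory beyond distance $k$, re-enter at a different boundary vertex, and climb back up to an intermediate vertex $u_j$ of the path to $v$ through an off-path child of $u_j$ --- a threat that simply does not exist in the single-path race or in the finite tree. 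Hence ``red wins the race along the path to $v$ using only the path clocks'' does not imply ``$v$ is ever red'' in the $G_n$ dynamics, and your per-boundary-vertex bound $\P_\lambda(v\in\R_n)\ge p_k(\lambda)$ is unproved (and not obviously true). The paper avoids this entirely: it couples the radius-$m$ ball with the limiting branching-process tree $\mathcal T$, observes that up to the \emph{first} time red reaches depth $m$ neither colour has left the ball (blue can only occupy previously red sites), so the dynamics agree with the tree dynamics, and then invokes \cite[Theorem 1.1]{bordenave2014extinction}: for $\lambda>\Lambda$ red reaches depth $m$ on $\mathcal T$ with probability at least some $\beta>0$ uniform in $m$, whence $\E_\lambda[|\R_n|]\ge\beta m\,\PP(C_m)\to\infty$. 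To repair your route you would either have to restrict all counting to the stopped process (before red first hits depth $k$), at which point the expected number of red boundary vertices no longer factorizes as $p_k\,\E|\partial B_k|$, or prove a survival statement on the tree of exactly Bordenave's type via a second-moment argument over paths --- i.e.\ the ingredient the paper imports. By contrast, your direct first-passage-percolation argument for the $E[D^2]=\infty$, $E[D]<\infty$ case (red spreads freely until the root is conquered at an independent $\Exp(1)$ time, so $\E_\lambda[|\R_n|]\ge(\lambda/(1+\lambda))^2\E|\partial B_2(\0)|\to\infty$) is a legitimate and arguably more elementary alternative to the paper's truncation-plus-Bordenave argument, modulo a short verification that $\E|\partial B_2(\0)|\to\infty$ in the configuration model.
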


Notice that the phase transition is discontinuous; $\E_\lambda [ |\R_n|]$ is uniformly bounded for all $\lambda \leq \Lambda$, and diverges for larger $\lambda$. Similar behavior for the expected range was  observed for chase-escape on trees in \cite{tree_chase}. Interestingly, the analogous phase transition was proven to be continuous for chase-escape with death \cite{ced}.

\subsection{Proof overview}
The lower bound on $\lambda_c(\G)$ is a relatively straightforward extension of the ideas in \cite[Corollary 1.2]{DJT} which uses the fact that the configuration model is locally tree-like. We prove \thref{thm:ER} using the tree-like local structure of $G_n$ along with a result from \cite{bordenave2014extinction} concerning survival of chase-escape on general trees.

The proof of the upper bound on $\lambda_c(\G)$ is inspired by the approach from \cite{gilbert}. 
We restrict to \emph{open} vertices from which red spreads to all of the neighbors before any adjacent blue vertex overtakes the red particle there. Red spreads uninhibited between such sites.
Our argument comes down to proving that the subgraph of open vertices and the edges between them contains a giant component with non-vanishing probability. 
Establishing this is subtle because high-degree vertices, which intuitively are the most crucial for making the component well-connected, are less likely to be included. 
While there is an extensive treatment of degree-dependent percolation in the applied literature under the name ``network robustness," we were unable to find many rigorous results in this area \cite{callaway2000network}. Our argument makes use of a recent criterion for a  uniformly random simple graph with a given degree distribution to contain a giant component from Joos, Perarnau, Rautenbach, and Reed \cite{JPRR18}. This amounts to showing that for $\lambda$ large enough, not too many vertices are deleted. So if $G_n$ contains a giant component, then the subgraph is likely to as well. 

\subsection{Future work} \label{sec:future}
The most natural next step is proving \thref{conj:lc}. While we do not have a concrete bound when proving $\lambda_c(\mathcal G) < \infty$, the implicit bound in \thref{thm:main} diverges as $D$ becomes stochastically larger.  Since $\Lambda \to 0$ as $D$ becomes large, it would be great to prove a universal upper bound for $\lambda_c(\G)$ that tends to zero for sparse random graphs. This would be a worthy pursuit even for special cases such as sparse Erd\H{o}s-R\'enyi or random regular graphs. 
 Our approach does not appear to extend to dense graphs where it is natural to conjecture that $\lambda_c =0$. However, to our knowledge, it is an open problem to prove that red reaches a positive fraction of vertices on such graphs for any $\lambda >0$. For example, it would be nice to prove this statement on Erd\H{o}s-R\'enyi graphs $G(n,p_n)$ with $\lim_{n \to \infty} n p_n \to \infty$. We believe that the value of $\lambda_c$ is unknown for this case even when $p_n \equiv p$, although there have been simulation studies \cite{de2015process} suggesting this is true.

\subsection{Organization}

In Section \ref{sec:upperbd} we prove the upper bound on $\lambda_c$ from \thref{thm:main}. In Section \ref{sec:ER} we prove upper and lower bounds on $\E_\lambda[|\R_n|]$ to complete the proof of \thref{thm:main} and proof of \thref{thm:ER}. 

\section{The upper bound}  \label{sec:upperbd}

We begin by reducing the problem of proving $\lambda_c(\G) < \infty$ to a question about degree-dependent percolation. As described in more detail in \cite{DJT}, the Markov property of the underlying Poisson processes allows us to represent chase-escape as a collection of independent red and blue passage times along each edge. Fix a vertex $v$. We imagine that $G_n$ has not been sampled yet so that $v$ has $D_v$ half-edges that will eventually be connected to other vertices in the graph. To each half-edge, we assign outbound red passage times $(\tau_{v,i}^r)_{i=1}^{D_v}$ and inbound blue passage times $(\tau_{v,i}^b)_{i=1}^{D_v}$. The $\tau_{v,i}^r$ are independent exponential random variables with mean $1/\lambda$ and the $\tau_{v,i}^b$ are independent exponential random variables with mean $1$.

Let $T_v^r = \max_{1 \leq i \leq D_v} \tau_{v,i}^r$ and $T_v^b =  \min_{1 \leq i \leq D_v} \tau_{v,i}^b.$
We call $v$ \emph{open} if the event
\begin{align}
O_v = \left\{ T_v^r < T_v^b\right \} \label{eq:open}
\end{align}
occurs.
In words, if $v$ becomes red, then the red particle at $v$ spreads to all neighbors of $v$ before $v$ can be overtaken by a blue particle at any of its neighbors. Note that $O_1$ is slightly different from the other $O_v$ since the presence of $\b$ ensures that $\deg 1 = D_1 +1$. Thus, we must include the additional passage time $\tau_{1,\b}^b$ for the edge $(\b,1)$. Similarly, we may need to include additional passage times at $n$ if the sum of the degrees is odd.  


Let $H_n$ be the subgraph of $G_n - \{\b\}$ consisting of all open vertices and edges between open vertices. Denote the largest connected component in $H_n$ by $H_n^{(1)}$. We say that the sequence $\HH := (H_n)_{n \geq 1}$ \emph{contains a giant component} if there exists $\delta >0$ such that
\begin{align}
\limsup_{n \to \infty} \P_\lambda(|H_n^{(1)}| >\delta n)>0. \label{eq:Hgiant}
\end{align}

\begin{lemma}\thlabel{lem:giant}
If $\lambda$ is such that $\HH$ satisfies \eqref{eq:Hgiant} for some $\delta >0$, then $\lambda_c(\G) < \lambda.$
\end{lemma}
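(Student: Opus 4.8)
The plan is to show that if the open subgraph $H_n$ contains a giant component with positive probability, then red reaches a positive fraction of the vertices of $G_n$ with positive probability — which by the definition \eqref{eq:lcf} of $\lambda_c(\G)$ forces $\lambda_c(\G) \le \lambda$, and in fact $<\lambda$ after a small perturbation argument. The key observation is monotonicity: red spreads unimpeded through open vertices. Concretely, consider the component $H_n^{(1)}$ of the open subgraph. If red ever occupies a single vertex $v \in H_n^{(1)}$, then by definition of $O_v$ the red particle at $v$ will push to every neighbor of $v$ before any blue particle reaches $v$; in particular every open neighbor of $v$ becomes red (blue cannot reach such a neighbor through $v$, and even if blue arrives via another route the open neighbor had already turned red). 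Iterating along paths in $H_n^{(1)}$, red occupies all of $H_n^{(1)}$, so $|\R_n| \ge |H_n^{(1)}|$.

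**The step that requires care** is connecting "red occupies some vertex of $H_n^{(1)}$" to the initial condition, since red starts at the root $1$, which may or may not be open and may or may not lie in $H_n^{(1)}$. Here I would argue as follows. First, the root $1$ is open with probability bounded below by a constant $p_0=p_0(\lambda)>0$ (it occurs iff all $D_1+1$ red passage times beat all blue passage times, an event of positive probability depending only on the law of $D$ and on $\lambda$), and this event is independent of the matching. Second — and this is the main obstacle — one must ensure that with positive probability the root lies in (or is adjacent to, and can feed) the giant open component. The cleanest route is to exploit the exchangeability of the configuration model: reveal the open/closed status of all vertices first (this depends only on the i.i.d.\ passage-time data and the degrees, not on the matching), condition on the high-probability-bounded-below event that the set of open vertices contains a linear-sized subgraph $H_n^{(1)}$, and then note that the uniform matching makes the root's half-edge equally likely to attach anywhere; so with probability bounded below the root is itself open and has a half-edge matched into $H_n^{(1)}$, placing it in the giant open component. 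Combined with the previous paragraph, on this positive-probability event $|\R_n| \ge |H_n^{(1)}| > \delta n$, so $\inf_{\delta'>0}\limsup_n \P_\lambda(|\R_n|>\delta' n) > 0$, giving $\lambda_c(\G)\le\lambda$.

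**To upgrade $\le$ to the strict inequality $\lambda_c(\G)<\lambda$** claimed in the lemma, I would observe that the event \eqref{eq:Hgiant} is, by a standard coupling, monotone enough in $\lambda$ near $\lambda$: increasing the red rate only makes each $O_v$ more likely and the passage times along edges only shorter for red, so if $H_n$ has a giant component with positive probability at parameter $\lambda$, the same holds at $\lambda$ itself with the inequality in \eqref{eq:Hgiant} being an open condition, hence at some $\lambda' < \lambda$ as well — more simply, since $\P_\lambda(O_v)$ is continuous and strictly increasing in $\lambda$, one replaces $\lambda$ by a slightly smaller $\lambda'$ for which the open subgraph still percolates, yielding $\lambda_c(\G) \le \lambda' < \lambda$. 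Alternatively, and perhaps more robustly, one can absorb the strictness into the definition directly: \eqref{eq:Hgiant} with a fixed $\delta$ gives, via the argument above, $\limsup_n\P_{\lambda}(|\R_n|>\delta n/2)>0$, and by the monotone coupling of $O_v$ in $\lambda$ this persists for $\lambda$ slightly reduced.

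**I expect the main obstacle** to be the second step — rigorously placing the root inside the giant open component with positive probability while respecting the dependence between the passage times (which determine both openness and the chase-escape dynamics) and the matching (which determines the graph). The exchangeability of half-edges in the configuration model is the right tool, but one must be careful that conditioning on the open-vertex set does not bias the matching, and that "the root's half-edge lands in $H_n^{(1)}$" genuinely has probability bounded below, which uses that $H_n^{(1)}$ contains a linear number of half-edges (not just vertices) — this in turn should follow because open high-degree vertices, though rarer, are not excluded, and a linear-vertex component in a graph with the given degree tail carries a linear number of half-edges. The remaining steps (positive probability that the root is open; red flooding an open component; the perturbation in $\lambda$) are routine.
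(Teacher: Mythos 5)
Your central coupling is the same as the paper's: since blue only ever occupies previously red vertices, once red occupies any vertex of a connected set of open vertices, every vertex of that set is at some time red, so $|\R_n|\geq |H_n^{(1)}|$ once red reaches the giant open component. The genuine gap is exactly the step you flag as the main obstacle: attaching the root to $H_n^{(1)}$ with probability bounded below. Your plan — reveal the open/closed statuses first (these are indeed independent of the matching) and then use uniformity of the matching to argue that a half-edge of the open root lands in $H_n^{(1)}$ — does not work as stated, because $H_n^{(1)}$ is not determined by the open/closed statuses: which open vertices form a linear-sized component depends on the matching itself, so conditioning on $\{|H_n^{(1)}|\geq\delta n\}$ biases the matching, and you can no longer treat the root's half-edges as uniformly matched. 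You acknowledge this difficulty but do not resolve it. The paper sidesteps it entirely by using exchangeability of \emph{vertex labels} rather than of half-edges: disregarding the edge to $\b$, vertex $1$ is indistinguishable from the vertices $2,\dots,n$, so $\P_\lambda\bigl(1\in H_n^{(1)}\,\big|\, |H_n^{(1)}|\geq \delta n\bigr)\geq \delta$, whence $\P_\lambda(|\R_n|\geq\delta n)\geq\delta\,\P_\lambda(|H_n^{(1)}|\geq\delta n)$ and \eqref{eq:Hgiant} gives the conclusion. With this one-line argument there is no need to separately bound the probability that the root is open or that its half-edge lands in the giant; the event $1\in H_n^{(1)}$ already contains both.

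Your perturbation paragraph for the strict inequality is also problematic (and unnecessary). To conclude $\lambda_c(\G)<\lambda$ via a smaller parameter you would need \eqref{eq:Hgiant} to hold at some $\lambda'<\lambda$, but the monotone coupling of the open events runs the wrong way for this: decreasing $\lambda$ makes each $O_v$ \emph{less} likely, so a giant open component at $\lambda$ says nothing at $\lambda'<\lambda$ without a continuity-in-$\lambda$ statement that you have not proved (and which is not obvious, given that openness probabilities are degree-dependent). The paper does not attempt this; its proof stops at $\limsup_{n}\P_\lambda(|\R_n|>\delta n)>0$ and concludes $\lambda_c(\G)\leq\lambda$ directly from the definition \eqref{eq:lcf}, which is all that is used downstream to get $\lambda_c(\G)<\infty$. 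So the correct repair of your write-up is to drop the perturbation argument and replace your root-attachment step by the vertex-exchangeability bound above.
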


\begin{proof}
 Disregarding the edge $(\b,1)$,  the root is indistinguishable from the other vertices. This self-similarity ensures that 
 \begin{align}\P_\lambda(1 \in H_n^{(1)} \mid |H_n^{(1)}| \geq k) \geq k/n. \label{eq:Hcond}
 \end{align}
Moreover, if $1 \in H_n^{(1)}$, it follows from the definition of an open vertex that all vertices in $H_n^{(1)}$ are at some time red in chase-escape on $G_n$. Thus, the two random variables are coupled so that $|\mathcal R_n| \geq |H_n^{(1)}|$ almost surely.
Conditioning on the event $|H_n^{(1)}| \geq \delta n$ then applying \eqref{eq:Hcond}, we may write
$$\P_\lambda (|\mathcal R_n| \geq \delta n) \geq \P_\lambda(|H_n^{(1)}| \geq \delta n \text{ and } 1 \in H_n^{(1)}) \geq \delta \P_\lambda (|H_n^{(1)}| \geq \delta n).$$ 
So, whenever \eqref{eq:Hgiant} holds, we have 
\begin{align}
    \limsup_{n \to \infty} \P_\lambda(|\R_n| > \delta n) \geq \delta \big[ \limsup_{n \to \infty} \P_\lambda(|H_n^{(1)}| >\delta n)\big] >0. \label{eq:suf}
\end{align}
We then have $\lambda_c(\G) \leq \lambda$ by definition. 
\end{proof}
In light of \thref{lem:giant}, the upper bound on $\lambda_c(\G)$ from \thref{thm:main} follows from finding $\lambda$ and $\delta$ satisfying \eqref{eq:Hgiant}. 

\begin{remark} \cite{callaway2000network, newman2003structure} provided a heuristic that $\HH$ contains a giant component with positive probability so long as 
$$\sum_{k>0} k (k-1) \P_\lambda(O_v \mid \deg v = k) P(D = k) > E D.$$
Since $\P_\lambda(O_v \mid \deg v = k) \to 1$ as $\lambda \to \infty$, this condition holds for large enough $\lambda$ by our assumption at \eqref{eq:mr}. Unfortunately, the heuristic does not account for the dependence in the degree sequence for $\HH$. We must take a longer route to obtain the desired result. 
\end{remark}

The first step is observing that $H_n$ has the law of a configuration model on a modified degree sequence.

\begin{lemma} \thlabel{lem:law}
Let $\hat{\mathcal D}_n = (\hat D_1,\hdots, \hat D_n)$ be a sequence of degrees with 
$$\hat D_i = \begin{cases}  
\text{$\deg(i)$ in the subgraph $H_n$}, & \text{ if } i \in H_n \\
0, & \text{ if } i \notin H_n
\end{cases}$$
Note that the edges from vertices in $H_n$ to those in $H_n^c$ are not counted in $\hat D_i$. 
We claim that the law of $H_n$ is that of a graph sampled from the configuration model with degree sequence $\hat \D_n$.
\end{lemma}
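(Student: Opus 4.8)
The plan is to exploit the fact that the family of openness events $\{O_v\}_v$ is measurable with respect to the passage times $\{\tau_{v,i}^r,\tau_{v,i}^b\}$ alone, which by construction are independent of the uniformly random matching $M$ of half-edges that produces $G_n$. So if we condition on all of the passage times --- equivalently, on the original degrees $(D_1,\dots,D_n)$ together with the set of open vertices $\mathcal O:=\{v:O_v\text{ occurs}\}$ --- then $M$ remains uniformly distributed over all perfect matchings of the half-edge set. Everything below is carried out under this conditioning, and the claim follows by averaging over $\mathcal O$ at the end.

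Write $S$ for the set of half-edges attached to vertices of $\mathcal O$ and, given $M$, let $A=A(M)\subseteq S$ be the set of open half-edges that $M$ matches to another open half-edge; then $A$ is $M$-closed, so $M|_A$ is a genuine perfect matching of $A$. By the definition of $H_n$, its edges are exactly the matched pairs lying inside $A$, so $H_n$ --- as a vertex-labeled multigraph on $\{1,\dots,n\}$ --- is precisely the multigraph encoded by $M|_A$, and $\hat D_i=|A\cap\{\text{half-edges of }i\}|$ for every $i$ (vertices outside $\mathcal O$ contribute no half-edges to $S$, so $\hat D_i=0$ there, in agreement with the statement). I would record these identities first.

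The crux is a deferred-decision statement: conditionally on $\mathcal O$ and on $\{A(M)=A\}$ for a fixed value of $A$, the induced matching $M|_A$ is uniformly distributed over the perfect matchings of $A$. Since $M$ is uniform, this reduces to the counting identity that the number of perfect matchings $M$ of the full half-edge set with $A(M)=A$ and $M|_A=m$ is the same for every perfect matching $m$ of $A$: once $m$ is fixed, the remaining freedom is a perfect matching of $S^c\sqcup(S\setminus A)$ in which every half-edge of $S\setminus A$ is paired with one of $S^c$, and the count of such matchings depends only on $|S\setminus A|$ and $|S^c|$, not on $m$ (it is a falling factorial times a double factorial). I would write this out only far enough to see the $m$-independence. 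I expect this bookkeeping --- together with making the conditioning on the internal set $A$ rather than on $\hat{\mathcal D}_n$ directly precise --- to be the main obstacle, since $A$ is strictly finer than $\hat{\mathcal D}_n$ but is exactly the object for which the uniform matching decomposes.

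To finish, observe that $\hat{\mathcal D}_n=(\hat D_i)$ is a deterministic function of $A$, whereas the law of the vertex-labeled multigraph of a uniform perfect matching of $A$ depends on $A$ only through the multiplicities $\hat D_i$, and is by definition the configuration model with degree sequence $\hat{\mathcal D}_n$. Summing the conditional law over all internal sets $A$ that produce a prescribed value of $\hat{\mathcal D}_n$, and then over $\mathcal O$, shows that, conditionally on $\hat{\mathcal D}_n$, the graph $H_n$ has the law of a configuration model with that degree sequence. Finally, the pre-attached edge $(\b,1)$ is not part of $M$ and is discarded in passing to $H_n\subseteq G_n-\{\b\}$, a possible parity half-edge at $n$ can be absorbed into $D_n$, and since $\sum_i\hat D_i=|A|$ is even no parity correction is needed for $\hat{\mathcal D}_n$ itself; none of this affects the argument.
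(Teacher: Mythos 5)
Your proposal is correct and is essentially the paper's argument: both proofs rest on the observation that openness is determined by the passage times alone and is therefore independent of the uniform matching, together with the fact that, conditionally, the matching restricted to the open--open half-edges is uniform, so that $H_n$ is a configuration model on $\hat{\mathcal D}_n$. The only difference is presentational: you justify the conditional uniformity of $M|_A$ by an explicit counting identity after conditioning on the internal set $A$, whereas the paper reaches the same point by revealing the edges incident to closed vertices first and invoking the deferred-decision property that the unrevealed half-edges are matched uniformly at random.
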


\begin{proof}
 Call a vertex \emph{closed} if it is not open. The open/closed status of each vertex is locally determined by the passage times at the site. Thus, whether or not each vertex is open is independent of the edge configuration of $G_n$. We begin by sampling the passage times and labeling the vertices of $G_n$ as open or closed. We then reveal the edge connections for only the closed vertices in $u \in H_n^c$. After revealing this, the closed vertices have degree $\hat D_u = 0$. The open vertices $v \in H_n$ have degree $\hat D_v$ equal to $D_v$ minus any half-edges that were revealed to be connected to closed vertices. The remaining half-edges correspond to the connections in $H_n$ which are yet to be revealed. The matching between these edges is independent of the exploration of $G_n$ to remove closed edges. Generating $H_n$ is thus a configuration model with degree distribution $\hat{\mathcal D}_n$. 
\end{proof}

A downside of the previous construction is that the vertex degrees in $\hat \D_n$ are dependent. Indeed, we know that with high probability $\sum_1^n (D_i - \hat D_i)$ is on the order of $n$ which imposes some structure on the $\hat D_i$. There are many theorems giving criteria for the emergence of giant components in graphs from the configuration model \cite{bollobas2015old, molloy1995critical, janson2009new}. However, these theorems assume that the empirical degree counts $m(k) = |\{i \leq n \colon \hat D_i = k\}|$ converge. While this is likely true for $\hat \D_n$, it is not so obvious how to deal with the dependence. We choose to proceed in a different manner that uses a recent, rather robust condition for giant components to exist in uniformly random simple graphs taken from \cite{JPRR18}. We can apply this condition since the law of the configuration model, conditioned to be simple, and the law of uniformly sampling a simple graph with the same degree sequence are the same, and, when $D$ satisfies \eqref{eq:mr}, the graphs $G_n$ (and thus $H_n$) are simple with positive probability uniformly in $n$ \cite{janson2009probability}.  

We define a few important quantities. Let $\mathcal D_n = (D_1, \dots, D_n)$ be a general, possibly dependent, degree sequence. Arrange $\mathcal{D}_n$ in increasing order $D_{\pi_1} \leq \hdots \leq D_{\pi_n}$
and define the quantities
\begin{align} \label{eq:mr-def}
    j_n &= \min \Big( \{j \leq n \colon \sum_{i=1}^j D_{\pi_i}(D_{\pi_i}-2) >0 \} \cup \{n \} \Big), \\
    \S_n &= \sum_{i=j_n}^n D_{\pi_i}, \quad 
    \mathcal{M}_n = \sum_{D_i \neq 2} D_i, \text{ and } \\
    \EE_n &= \sum_{i\in H_n^c} D_{i}.
\end{align}

\begin{lemma} \thlabel{lem:giant} 

 If $\S_n \ge \epsilon \M_n$ from \eqref{eq:mr-def} with high probability for some fixed $\epsilon >0$, then there exists $\delta>0$ such that $G_n$ has a component containing at least $\delta n$ vertices with high probability.
\end{lemma}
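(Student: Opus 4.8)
\noindent\emph{Proof plan.} The plan is to deduce this directly from the giant-component criterion of Joos, Perarnau, Rautenbach, and Reed \cite{JPRR18}. For a deterministic degree sequence, their result gives the order of the largest component of the associated random graph in terms of exactly the quantities $j_n,\S_n,\M_n$ of \eqref{eq:mr-def}: with high probability the largest component has at least $\kappa\,\S_n-o(n)$ vertices for an absolute constant $\kappa>0$, the $o(n)$ being uniform over degree sequences with $\M_n$ of order $n$. Consequently, if $\S_n\geq\epsilon\M_n$ while $\M_n=\Theta(n)$ --- the latter available here from \eqref{eq:mr}, as $0<E[D(D-2)]$ forces $P(D\notin\{0,2\})>0$ and hence a linear number of vertices contributing to $\M_n$ --- then $\S_n=\Omega(n)$ and there is a component of order at least $\delta n$ for some $\delta=\delta(\epsilon)>0$. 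I would use the configuration-model (multigraph) form of the criterion, so that this holds for $G_n$ directly; alternatively, as in the discussion preceding the lemma, one may invoke the simple-graph form and transfer it via the fact that the configuration model conditioned on being simple is uniform over simple graphs with the same degrees, together with $\P_\lambda(G_n\text{ simple})\geq c_0>0$ uniformly in $n$ under \eqref{eq:mr} \cite{janson2009probability}; this weaker route still produces a linear component with probability bounded away from $0$, which is already enough for \eqref{eq:Hgiant}.

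To pass from this deterministic input to the stated random setting, I would condition on the degree sequence $\D_n$. Let $A_n$ be the event that $G_n$ has a component of order at least $\delta n$, and let $B_n=\{\S_n\geq\epsilon\M_n\}\cap\{\M_n\geq c_1 n\}$, which has probability tending to $1$ by the hypothesis together with \eqref{eq:mr}. Then
\[
\P_\lambda(A_n)\;\geq\;\E_\lambda\!\left[\ind{B_n}\,\P_\lambda(A_n\mid\D_n)\right],
\]
and on $B_n$ the conditional law of $G_n$ is that of a configuration model on a fixed sequence meeting the criterion, so $\P_\lambda(A_n\mid\D_n)\geq 1-o(1)$ with the $o(1)$ uniform over degree sequences in $B_n$; since $\P_\lambda(B_n)\to 1$, this forces $\P_\lambda(A_n)\to 1$. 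The $O(1)$ changes to the degree sequence built into $G_n$ (the extra edge at the root, and possibly a half-edge at vertex $n$) shift $j_n,\S_n,\M_n$ by $O(1)$ and are harmless; and the case in which some vertex has degree at least $\delta_0 n$ is disposed of separately and trivially, since with high probability such a vertex is adjacent to at least $\delta_0 n/2$ distinct vertices.

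I expect the main obstacle to be the faithful invocation of \cite{JPRR18}: translating our hypothesis --- ``$\S_n\geq\epsilon\M_n$ with high probability'' for a random, dependent degree sequence --- into the precise deterministic hypotheses of that theorem, and in particular verifying that its $o(n)$ error term is uniform over the class of degree sequences that can arise here, together with the minor simple-versus-multigraph bookkeeping. The probabilistic content is by comparison light: once the degree sequence is fixed, the dependence among the $D_i$ that complicates $\hat{\D}_n$ no longer matters.
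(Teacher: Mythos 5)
Your proposal takes essentially the same route as the paper: the paper's entire proof is that the lemma is a restatement of \cite[Theorem 2]{JPRR18} for the degree sequences under consideration, which is exactly the criterion you invoke. The extra bookkeeping you supply (conditioning on the degree sequence, the simple-versus-multigraph transfer via \cite{janson2009probability}, and the $O(1)$ degree perturbations) is consistent with how the paper handles those points, though it defers the simplicity issue to the proof of \thref{prop:Hn} rather than folding it into this lemma.
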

\begin{proof}
This is a restatement of \cite[Theorem 2]{JPRR18} for the degree distributions we consider.
\end{proof}

We now would like to show that the criterion in \thref{lem:giant} can be applied to $\hat \D_n$ to infer that $\HH$ contains a giant component. We do so by showing that $\G$ contains a giant component, and then show that the ``damage" done by removing closed edges does not impact the criterion at \eqref{eq:mr-def} in a serious way. This is done formally by controlling the value of $j_n$, which can be thought of as the minimum amount of $\D_n$ that must be revealed for a Molloy-Reed-type condition to hold. 

To start we need to control the left and right tails of summing order statistics as in the definitions of $j_n$ and $\mathcal S_n$.

\begin{lemma} \thlabel{lem:lln}
Let $X_1, X_2,\hdots$ be independent and identically distributed random variables supported on the integers with finite mean $\mu>0$. Denote the $i$th order statistic of the sub-collection $X_1,\hdots, X_n$ by $X_{\pi_i}^n$. Given $\epsilon \in (0,1)$, there exists $\delta >0$ such that for $n_\delta = \lceil (1- \delta) n \rceil$ we have
\begin{align}
 \f 1n\sum_{i=1}^{n_\delta } X_{\pi_i}^n \geq (1-\epsilon)\mu
\text{ and  }
 \f 1n \sum_{i=n_\delta +1}^{n} X_{\pi_i}^n \leq \epsilon \mu  \text{ with high probability.} \label{eq:lln}
\end{align}
\end{lemma}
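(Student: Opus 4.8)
The plan is to split the two claims and handle each via a standard law-of-large-numbers argument combined with a quantile/truncation estimate. For the left tail, fix a large truncation level $K$ and compare $\sum_{i=1}^{n_\delta} X^n_{\pi_i}$ with $\sum_{i=1}^n (X_i \wedge K)$. Since the $n-n_\delta = \lfloor \delta n\rfloor$ largest order statistics are removed, we lose at most the sum of the $\lfloor \delta n\rfloor$ largest values; but each retained value is at least $0$, and after capping at $K$ each discarded value contributes at most $K$, so $\frac1n\sum_{i=1}^{n_\delta} X^n_{\pi_i} \ge \frac1n\sum_{i=1}^n (X_i\wedge K) - \delta K$. By the classical (weak) LLN, $\frac1n\sum_{i=1}^n (X_i\wedge K)\to E[X\wedge K]$ in probability, and by monotone convergence $E[X\wedge K]\uparrow \mu$ as $K\to\infty$. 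So first choose $K$ with $E[X\wedge K] \ge (1-\epsilon/3)\mu$, then choose $\delta$ small enough that $\delta K \le (\epsilon/3)\mu$; with high probability $\frac1n\sum_{i=1}^n(X_i\wedge K)\ge (1-\epsilon/3)\mu - (\epsilon/3)\mu\cdot\frac{1}{1}$ adjusting constants, giving the first bound in \eqref{eq:lln} with probability tending to $1$.

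For the right tail, I want to show the $\lfloor\delta n\rfloor$ largest values sum to at most $\epsilon\mu n$ with high probability. Write $\frac1n\sum_{i=n_\delta+1}^n X^n_{\pi_i} = \frac1n\sum_{i=1}^n X_i - \frac1n\sum_{i=1}^{n_\delta} X^n_{\pi_i}$. The first term converges to $\mu$ by the LLN, and the left-tail bound just proved shows the subtracted term is at least $(1-\epsilon)\mu + o(1)$ with high probability — but that only gives an upper bound of roughly $\epsilon\mu$, which is exactly what we want, provided we are careful to run the left-tail argument with a slightly smaller error parameter. Concretely, apply the left-tail estimate with $\epsilon/2$ in place of $\epsilon$ to get $\delta$ and the bound $\frac1n\sum_{i=1}^{n_\delta}X^n_{\pi_i}\ge (1-\epsilon/2)\mu$ w.h.p.; combined with $\frac1n\sum_{i=1}^n X_i \le (1+\epsilon/2)\mu$ w.h.p., we obtain $\frac1n\sum_{i=n_\delta+1}^n X^n_{\pi_i}\le \epsilon\mu$ w.h.p. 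Intersecting the two high-probability events completes the proof, with the $\delta$ from the $\epsilon/2$ version of the left-tail bound serving for both claims.

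The main obstacle — really the only delicate point — is that the order statistics are not independent, so one cannot directly apply Chebyshev or Chernoff to $\sum_{i=n_\delta+1}^n X^n_{\pi_i}$. The trick that circumvents this is to never estimate a sum of order statistics directly: both the upper bound on the tail sum and the lower bound on the truncated sum are derived from the full sum $\sum_{i=1}^n X_i$ and the truncated full sum $\sum_{i=1}^n (X_i\wedge K)$, which are sums of i.i.d. terms to which the ordinary LLN applies. A secondary technical wrinkle is the interplay of the three parameters ($K$, $\delta$, and the tolerance $\epsilon$): $K$ must be chosen first (depending only on $\epsilon$ and the law of $X$ via monotone convergence), and then $\delta$ chosen small relative to $K$; I would present this ordering explicitly to avoid circularity. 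Everything else is bookkeeping with $\epsilon/2$ and $\epsilon/3$ splits and a union bound over finitely many high-probability events.
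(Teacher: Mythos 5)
Your proof is correct, and it reaches the conclusion by a slightly different (and in one respect cleaner) route than the paper. Both arguments share the same core strategy: never estimate the dependent order-statistic sums directly, but instead compare them deterministically to sums of i.i.d.\ truncated variables, apply the ordinary law of large numbers, and deduce the tail bound $\frac 1n\sum_{i=n_\delta+1}^n X_{\pi_i}^n\le\epsilon\mu$ by subtracting the first bound from the LLN for the full sum (this is exactly the paper's step \eqref{eq:end}). Where you differ is in how the truncation and $\delta$ are chosen: the paper truncates with an indicator at a quantile-type threshold $M$ satisfying $E[X_1\ind{X_1\le M}]\ge(1-\epsilon/3)\mu$, takes $\delta=P(X_1>M)/2$, and then argues that all values $\le M$ fall among the first $n_\delta$ order statistics with high probability; this forces a separate case analysis when the positive support of $X_1$ is bounded (where $P(X_1>M)$ can vanish). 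Your choice --- cap at a level $K$ with $E[X\wedge K]\ge(1-\epsilon/3)\mu$, take $\delta K\le(\epsilon/3)\mu$, and use the deterministic inequality $\sum_{i=1}^{n_\delta}X_{\pi_i}^n\ge\sum_{i=1}^n(X_i\wedge K)-\delta nK$ --- handles bounded and unbounded support uniformly and eliminates that case split, at the modest cost of the $K$-versus-$\delta$ ordering you already flag.

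One small blemish: your justification ``each retained value is at least $0$'' is neither needed nor available in general, since the lemma allows negative integer values (and in the paper's application $X_i=D_i(D_i-2)$ equals $-1$ when $D_i=1$). The displayed inequality is nevertheless valid without any sign assumption, because $X_{\pi_i}^n\ge X_{\pi_i}^n\wedge K$ for the retained terms and each discarded capped term is at most $K\ge 0$; I would simply replace that phrase by this observation. Likewise, the convergence $E[X\wedge K]\to\mu$ should be justified by monotone convergence after splitting off the integrable negative part (or dominated convergence on it), which is routine.
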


\begin{proof}
First we assume that $P(X_1 > x) >0$ for all $x\geq 0$ so that $X_1$ has unbounded positive support. Set $$M := \min \{ x \colon E [X_1 \ind{X_1 \leq x}] \geq (1- \epsilon/3) \mu  \},$$ which exists by the hypothesis that $E [X_1] = \mu$. Let $0< p := P(X_1 > M)$. 
The law of large numbers ensures that 
\begin{align}\f 1n \sum_1^n \ind{X_i \leq M} \leq 1- \f p 2 >0 \text{ and } \f 1n \sum_1^n X_i \ind{X_i \leq M} \geq (1- \epsilon/2) \mu \label{eq:lln1} \end{align}
with high probability. Setting $\delta = p/2$, we then have 
$$I_M^n := \{ 1\leq i \leq n \colon X_{\pi_i}^n \leq M\} \subseteq \{1,2,\hdots, n_\delta \}$$ 
with high probability.
Since the $X_{\pi_i}^n$ are order statistics, it follows that
$$\sum_{i=1}^{n_\delta} X_{\pi_i}^n \geq \sum_{i \in \pi^{-1} (I_M^n)} X_i = \sum_{i=1}^n X_i \ind{X_i \leq M}$$
with high probability.
Dividing by $n$ and applying \eqref{eq:lln1} gives
\begin{align}
\f 1n \sum_{i =1}^{n_\delta} X_{\pi_i}^n \geq \f 1n  \sum_{i=1}^n X_i \ind{ X_i \leq M} \geq  (1- \epsilon/2) \mu  \label{eq:ub}
\end{align}
with high probability. The relation at \eqref{eq:ub} is the first part of \eqref{eq:lln}. The second part follows from \eqref{eq:ub} along with the observation, again from the law of large numbers, that 
\begin{align}
\f 1n \sum_{i=1}^{n_\delta} X_{\pi_i}^n + \f 1n \sum_{i =n_\delta+1}^n X_{\pi_i}^n = \f 1n \sum_{i=1}^n X_i  \leq (1+ \epsilon/2) \mu \label{eq:end}
\end{align}
with high probability. 

Next, suppose that $M = \max \{ x \colon P(X_1 = x) >0\}$ exists for some finite $M$ so that the positive support of $X_1$ is bounded. Let $p = P(X_1 = M) >0$. Note that if $p=1$, then the $X_i$'s are deterministic and the desired claim is trivial. So, suppose that $0<p<1$ and set $\delta = \epsilon p \mu / M$. Since the mean $\mu$ of $X_1$ is bounded by the maximum value $M$ in the support of $X_1$ we have $\mu /M < 1$. This, along with the assumption $\epsilon <1$, implies that $\delta < p$. Thus, the law of large numbers ensures that $X_{\pi_i}^n = M$ for all $i > n_\delta$ with high probability. Using this along with the simple observation that $n_\delta \geq  (1- \delta)n $ gives
\begin{align}
\f 1n \sum_{i=n_\delta +1}^{n} X_{\pi_i}^n = \f 1 n (n - n_\delta  ) M  \leq   \delta  M = \epsilon p \mu \leq \epsilon \mu\label{eq:finM}
\end{align}
with high probability. 
This gives the second part of \eqref{eq:lln}, from which the first part can be derived using similar reasoning as at \eqref{eq:end} except with the equality
\begin{align}
\f 1n \sum_{i=1}^{n_\delta} X_{\pi_i}^n + \f 1n \sum_{i =n_\delta+1}^n X_{\pi_i}^n = \f 1n \sum_{i=1}^n X_i  \geq (1- \epsilon ) \mu \label{eq:end2}
\end{align}
with high probability. 
\end{proof}

\begin{lemma} \thlabel{lem:jn}
Let $j_n$ be as defined at \eqref{eq:mr-def}. There exists $\alpha>0$ such that $j_n \leq (1-\alpha)n$ with high probability.
\end{lemma}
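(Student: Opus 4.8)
The plan is to show that even after deleting the $\alpha n$ vertices of largest degree, the truncated sum $\sum_{i=1}^{(1-\alpha)n} D_{\pi_i}(D_{\pi_i}-2)$ is still positive with high probability; by the definition of $j_n$ at \eqref{eq:mr-def} this immediately forces $j_n \le (1-\alpha)n$. The estimate splits into a law-of-large-numbers statement for the \emph{full} sum together with an upper bound, supplied by \thref{lem:lln}, on the contribution of the top order statistics.

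First I would record that \eqref{eq:mr} implies $0 < \mu := E[D^2] < \infty$: indeed $d(d-2)\ge \tfrac12 d^2$ for integers $d\ge 4$ while $d(d-2)\ge -1$ for every nonnegative integer, so finiteness of $E[D(D-2)]$ gives $E[D^2\ind{D\ge 4}]<\infty$ and hence $E[D^2]<\infty$, while positivity of $E[D(D-2)]$ rules out $D\equiv 0$, so $\mu>0$. Write $\beta := E[D(D-2)]\in(0,\infty)$. Since $d(d-2)\ge -1$ we have $E|D(D-2)|\le \beta+2<\infty$, so the strong law of large numbers gives $\frac1n\sum_{i=1}^n D_i(D_i-2)\to\beta$; in particular $\sum_{i=1}^n D_i(D_i-2)\ge(1-\epsilon)\beta n$ with high probability for any fixed $\epsilon>0$.

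Next, fix $\epsilon\in(0,1)$ with $\epsilon < \beta/(\beta+\mu)$ and apply \thref{lem:lln} to the i.i.d.\ integer-valued variables $X_i = D_i^2$, which have mean $\mu\in(0,\infty)$: there is $\delta>0$ so that with $n_\delta = \lceil(1-\delta)n\rceil$ the bound $\frac1n\sum_{i=n_\delta+1}^n X_{\pi_i}^n \le \epsilon\mu$ holds with high probability. Because $d\mapsto d^2$ is nondecreasing on the nonnegative integers, the order statistics of $(D_i^2)$ are the squares of the order statistics of $(D_i)$, so this says $\sum_{i=n_\delta+1}^n (D_{\pi_i}^n)^2 \le \epsilon\mu n$ with high probability; using $d(d-2)\le d^2$ the same bound holds with $(D_{\pi_i}^n)^2$ replaced by $D_{\pi_i}^n(D_{\pi_i}^n-2)$. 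Subtracting the bound on the top block from the law-of-large-numbers bound on the full sum,
\[
\sum_{i=1}^{n_\delta} D_{\pi_i}^n(D_{\pi_i}^n-2) \;\ge\; (1-\epsilon)\beta n - \epsilon\mu n \;=\; \bigl(\beta-\epsilon(\beta+\mu)\bigr)\,n \;>\;0
\]
with high probability, by the choice of $\epsilon$. Hence $j_n\le n_\delta\le(1-\delta)n+1\le(1-\delta/2)n$ for all large $n$, and so $\alpha:=\delta/2$ works.

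I do not anticipate a genuine obstacle here: \thref{lem:lln} is exactly the tool needed to control the right tail of a sum of order statistics, and the only points requiring a little care are the elementary inequalities $\tfrac12 d^2\le d(d-2)\le d^2$ (valid for $d\ge 4$, resp.\ $d\ge 0$) together with the monotone identification of the order statistics of $(D_i^2)$ with those of $(D_i)$, which jointly let us dominate the (possibly very large, positive) contribution of the highest-degree vertices without having to describe it exactly. The remaining bookkeeping — the ceiling in $n_\delta$ and the order of quantifiers ($\epsilon$ chosen first, then $\delta$ produced by \thref{lem:lln}) — is routine.
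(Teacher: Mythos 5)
Your proof is correct, and it leans on the same two ingredients as the paper --- \thref{lem:lln} plus a law of large numbers --- but it slices the sum differently. The paper's proof is a one-liner: it applies the lower-tail half of \eqref{eq:lln} directly to $X_i = D_i(D_i-2)$, whose mean is positive precisely by \eqref{eq:mr}, to conclude that the partial sum over the $\lceil(1-2\alpha)n\rceil$ smallest terms already exceeds $\tfrac{\beta}{2}n$. You instead control the full sum by the SLLN and subtract an upper bound on the top block, obtained by applying the upper-tail half of \eqref{eq:lln} to $D_i^2$ (after first deducing $E[D^2]<\infty$ from \eqref{eq:mr}, which you do correctly) and using $d(d-2)\le d^2$. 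The paper's route is shorter and needs no moment deduction; your route has the virtue of only ever ordering by monotone functions of the degrees, which sidesteps a small point the paper's argument glosses over: \thref{lem:lln} produces order statistics of $X_i=D_i(D_i-2)$, while $j_n$ in \eqref{eq:mr-def} is defined via the ordering by $D_i$, and $d\mapsto d(d-2)$ is not monotone at $d=0,1,2$. (That gap in the paper is harmless --- the sum of the $k$ smallest values of $X$ lower-bounds the sum of $X$ over any $k$ indices, in particular over the $k$ smallest degrees --- but your decomposition avoids having to say even that.) Both arguments deliver the same conclusion $j_n\le(1-\alpha)n$ with high probability.
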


\begin{proof}
Let $X_i = D_i(D_i -2)$. Set $E [X_1] = \beta >0$. \thref{lem:lln} ensures that there exists $\alpha >0$ such that 
$$ \sum_{i=1}^{\lceil (1-2\alpha) n \rceil} X_{\pi_i} \geq \f{\beta}{2} n >0$$
with high probability. 
Referring back to the definition of $j_n$ at \eqref{eq:mr-def}, this ensures that $j_n \leq (1-\alpha)n$ as claimed. 
\end{proof}

We will require a bound on $\EE_n$, the number of half-edges connected to $H_n^c$, defined at \eqref{eq:mr-def}. 

\begin{lemma} \thlabel{lem:En}
 Let $\EE_n$ be as defined at \eqref{eq:mr-def}. For all $\epsilon >0$, there exists $\lambda>0$ such that $\EE_n < \epsilon n$ with high probability. 
\end{lemma}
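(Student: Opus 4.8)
The plan is to bound $\EE_n = \sum_{i \in H_n^c} D_i$ by a sum of independent contributions indexed by the degree sequence, and then choose $\lambda$ large so that each vertex is closed with small probability. First I would write $\EE_n = \sum_{i=1}^n D_i \ind{i \in H_n^c} = \sum_{i=1}^n D_i \ind{O_i^c}$, using that the open/closed status of $v$ depends only on the passage times at $v$ (as established in the proof of \thref{lem:law}), so that $\ind{O_i^c}$ is a function of $D_i$ and the i.i.d.\ exponential marks at vertex $i$ only. In particular the pairs $(D_i, \ind{O_i^c})$ are independent across $i$ (ignoring the harmless modifications at the root $1$ and possibly at $n$, which contribute $O(D_1 + D_n)$ and wash out after dividing by $n$). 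Hence $\EE_n$ is, up to these negligible boundary terms, a sum of $n$ i.i.d.\ random variables $Y_i := D_i \ind{O_i^c}$, and by the law of large numbers $\EE_n / n \to E[Y_1] = E[D \ind{O^c}]$ in probability, where $O^c = \{T^r > T^b\}$ with $T^r = \max_{i \le D} \tau_i^r$, $T^b = \min_{i \le D} \tau_i^b$.

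The next step is to show $E[D \ind{O^c}]$ can be made smaller than any prescribed $\epsilon$ by taking $\lambda$ large. Conditioning on $D = k$, we have $\P_\lambda(O_v^c \mid \deg v = k) = \P_\lambda(\max_{i \le k} \tau_i^r > \min_{i \le k} \tau_i^b)$, and since the $\tau_i^r$ are $\mathrm{Exp}(\lambda)$ and $\tau_i^b$ are $\mathrm{Exp}(1)$, a crude union bound gives $\P_\lambda(O_v^c \mid \deg v = k) \le \sum_{i,j} \P(\tau_i^r > \tau_j^b) = k^2 \cdot \frac{1}{1+\lambda} \le k^2/\lambda$. Therefore $E[D \ind{O^c}] \le \sum_k k \cdot (k^2/\lambda) \wedge 1 \cdot P(D = k)$, and I want to send this to $0$ as $\lambda \to \infty$. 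The cleanest route is dominated convergence: $D \ind{O^c} \le D$, which is integrable because \eqref{eq:mr} forces $E[D^2] < \infty$ (so certainly $E[D] < \infty$), and for each fixed realization $\ind{O_v^c} \to 0$ almost surely as $\lambda \to \infty$ since $T_v^r \to 0$ in probability while $T_v^b$ is a fixed positive random variable; a monotonicity-in-$\lambda$ coupling of the $\tau_i^r$ makes this convergence almost sure. Thus $\lim_{\lambda \to \infty} E[D \ind{O^c}] = 0$, so we may pick $\lambda$ with $E[D \ind{O^c}] < \epsilon/2$, and then with high probability $\EE_n / n < \epsilon$.

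I would assemble these pieces as follows: (i) reduce $\EE_n$ to a sum of i.i.d.\ terms plus negligible boundary corrections, invoking the independence of open/closed status from the matching; (ii) apply the weak law of large numbers to get concentration of $\EE_n/n$ near $E[D\ind{O^c}]$; (iii) bound $E[D \ind{O^c}]$ and show it vanishes as $\lambda \to \infty$ via dominated convergence, using $E[D] < \infty$ from \eqref{eq:mr}; (iv) conclude by choosing $\lambda$ large enough. The main obstacle — really the only subtle point — is step (iii): one must be careful that $E[D \ind{O^c}] \to 0$ even though higher-degree vertices are much more likely to be closed (the bound $k^2/\lambda$ grows in $k$), so a naive interchange of sum and limit is not automatic; the resolution is that $D$ itself is the integrable dominating function and the integrand is bounded by $D$ uniformly in $\lambda$, so dominated convergence applies cleanly once one notes $\ind{O_v^c} \to 0$ pointwise. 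A secondary bookkeeping point is handling the extra half-edge at vertex $1$ (and possibly at $n$): these change at most two of the $Y_i$ and their degrees by at most one, contributing $O(1/n)$ to $\EE_n/n$, which is absorbed into the high-probability statement.
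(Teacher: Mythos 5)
Your proof is correct, but it takes a genuinely different route from the paper's. You work directly with the products $D_i\ind{O_i^c}$: since the open/closed status of a vertex is determined by its own passage times and is independent of the matching, these are i.i.d.\ (up to the boundary modifications at $1$ and possibly $n$), so the weak law of large numbers gives $\EE_n/n \to E[D\ind{O^c}]$, and you kill this limit by dominated convergence in $\lambda$ with dominating function $D$ (integrable because \eqref{eq:mr} forces $E[D^2]<\infty$), the conditional bound $\P_\lambda(O^c \mid D=k) \le k^2/(1+\lambda)$ supplying the pointwise decay. The paper instead decouples degree from openness: it shows $\P_\lambda(O_v)\to 1$ as $\lambda\to\infty$, deduces via Binomial domination and the law of large numbers that $|H_n^c|\le \delta n$ with high probability, and then bounds $\EE_n$ by the sum of the \emph{largest} $\delta n$ order statistics of the degrees, which is at most $\epsilon n$ by \thref{lem:lln}. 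The paper's argument is worst-case — it never touches the joint law of $(D_v,\ind{O_v^c})$, so it holds even if all closed vertices were the highest-degree ones — at the cost of invoking the order-statistics lemma; your argument confronts the positive correlation between degree and closedness head-on and resolves it with dominated convergence, which is more direct and does not need \thref{lem:lln} for this step. Your handling of the extra half-edge at vertex $1$ (and possibly at $n$) as an $O(D_1+D_n)$ correction is adequate. One small simplification: the coupling-in-$\lambda$ remark is unnecessary, since all you need is $\lim_{\lambda\to\infty}\sum_k k\,P(D=k)\,\P_\lambda(O^c\mid D=k)=0$, which follows from the $k^2/(1+\lambda)$ bound together with dominated convergence for series.
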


\begin{proof}
Consider the event $O_v$ from \eqref{eq:open}. Notice that for $v \neq 1$,  $T_v^r$ is distributed as the maximum of $D_v$ many independent exponential with mean $\lambda$ random variables, and $T_v^b$ is distributed as the minimum of $D_v$ many independent unit exponential random variables. These are elementary distributions, from which it is straightforward to derive the equality
\begin{align}
\P_\lambda(T_v^r < T_v^b) = \sum_{k=0}^\infty  \int_0^\infty (1- e^{-\lambda x})^k ke^{-kx} \:dx P(D_v = k).\label{eq:exp}
\end{align}
Taking $\lambda \to \infty$ and applying the dominated convergence theorem gives that the expression in \eqref{eq:exp} converges to $1$. A slight modification gives the same result for $v=1$.

Since $E D_1 <\infty$, \thref{lem:lln} ensures that there exists  $\delta>0$ such that 
\begin{align} \sum_{i=\lceil (1-\delta) n \rceil }^n D_{\pi_i} < \epsilon n\label{eq:lln-2}
\end{align}
with high probability. As explained after \eqref{eq:exp}, let $\lambda$ be such that the probability that a vertex is open is greater than $1-(\delta/2)$. Further observe that the events $\{O_v\}_{v \in G_n}$ are independent since they concern disjoint sets of the underlying independent red and blue passage times.  Thus, the number of non-open vertices is dominated by a $\Bin(n, \delta/2)$ random variable. The law of large numbers ensures that with high probability 
\begin{align}
|H_n^c| \leq  \delta  n. \label{eq:Hnc}
\end{align}
On this event and \eqref{eq:lln-2}, we then have
$$\EE_n = \sum_{v \in H_n^c} D_v \leq \sum_{i = \lceil (1- \delta) n \rceil }^n D_{\pi_i} < \epsilon n$$
with high probability. 
\end{proof}

\begin{lemma}\thlabel{lem:hatj}
Let $\hat D$ be as in \thref{lem:law} and define $\hat j_n$ analogously to how $j_n$ is defined at \eqref{eq:mr-def} with $\hat D_{\hat \pi_i}$ the $i$th order statistic of $\hat {\mathcal D} _n = (\hat D_1, \hdots , \hat D_n)$. For $\lambda$ sufficiently large, there exists $\hat \alpha>0$ such that $\hat j_n \leq (1 -\hat \alpha)n$ with high probability.
\end{lemma}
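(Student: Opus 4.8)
The plan is to compare the (dependent) degree sequence $\hat{\mathcal D}_n$ with an i.i.d.\ surrogate and then run the argument of \thref{lem:jn}. Let $\hat D^+_v := D_v\,\mathbf 1_{O_v}$, which differs from $\hat D_v$ only in that an open vertex is not docked for the half-edges joining it to $H_n^c$; set $c_v := \hat D^+_v - \hat D_v \ge 0$, so $\sum_v c_v \le \EE_n$. The point of $\hat D^+$ is that $O_v$ depends only on the passage times at $v$, so conditionally on the degrees the events $\{O_v\}$ are independent, and hence the $\hat D^+_v$ are i.i.d.: given $D_v=k$ we have $\hat D^+_v = k$ with probability $p_k := \P_\lambda(O_v\mid \deg v = k)$ and $\hat D^+_v=0$ otherwise. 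As noted in the proof of \thref{lem:En}, $p_k\to 1$ for every fixed $k$ as $\lambda\to\infty$; since $|k(k-2)|\,p_k\le k^2$ is summable against $P(D=k)$ because $E[D^2]<\infty$, dominated convergence gives $E[\hat D^+(\hat D^+-2)]\to E[D(D-2)]=\beta>0$. Fix $\lambda$ large enough that $E[\hat D^+(\hat D^+-2)]\ge\beta/2$. Then \thref{lem:lln} applied to the integer-valued, integrable variables $\hat D^+_i(\hat D^+_i-2)$ --- exactly as in \thref{lem:jn} --- yields some $\alpha^+>0$ with $\sum_{i=1}^{(1-2\alpha^+)n}\hat D^+_{\hat\pi^+_i}(\hat D^+_{\hat\pi^+_i}-2)\ge \tfrac\beta4 n$ with high probability, where $\hat\pi^+$ sorts $\hat{\mathcal D}^+_n$ increasingly. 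Shrinking $\alpha^+$ (which only weakens this conclusion) we may also assume $2\alpha^+\le \tfrac18 P(D\ge 3)$; note $P(D\ge 3)>0$, as otherwise $\beta=-P(D=1)\le 0$.

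Next I record a deterministic identity. For any degrees $(d_v)_{v=1}^n$ write $\Sigma=\sum_v d_v(d_v-2)$ and let $f(\ell)$ be the sum of the $\ell$ largest of the numbers $d_v(d_v-2)$; then the increasing-order prefix sum appearing in \eqref{eq:mr-def} equals $\Sigma - f(n-j)$ at index $j$, and as a function of $j$ it is nonincreasing then nondecreasing, so $j_n\le(1-\alpha)n$ if and only if $f(\alpha n)<\Sigma$. Let $\hat f,\hat\Sigma$ (resp.\ $\hat f^+,\hat\Sigma^+$) denote these quantities for $\hat{\mathcal D}_n$ (resp.\ $\hat{\mathcal D}^+_n$); the previous paragraph says $\hat f^+(2\alpha^+ n)\le\hat\Sigma^+-\tfrac\beta4 n$ w.h.p. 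I then replace the plus-quantities by the true ones using two monotonicity facts. First, any term $\hat D_v(\hat D_v-2)\ge 3$ forces $\hat D_v\ge 3$, hence $\hat D^+_v\ge 3$, and since $x\mapsto x(x-2)$ is increasing on $[2,\infty)$ this gives $\hat D_v(\hat D_v-2)\le\hat D^+_v(\hat D^+_v-2)$; because there are at least $2\alpha^+ n$ vertices with $\hat D_v\ge 3$ w.h.p.\ (indeed $\#\{v:\hat D_v\ge 3\}\ge \#\{v:\hat D^+_v\ge 3\}-\EE_n\ge \tfrac14 P(D\ge 3)n - o(n)$ by the law of large numbers and \thref{lem:En}), the $2\alpha^+ n$ largest terms of $\hat{\mathcal D}_n$ are positive and dominated one-for-one by distinct terms of $\hat{\mathcal D}^+_n$, whence $\hat f(2\alpha^+ n)\le\hat f^+(2\alpha^+ n)$. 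Second, for each $v$, $\hat D^+_v(\hat D^+_v-2)-\hat D_v(\hat D_v-2)=c_v(\hat D^+_v+\hat D_v-2)\le 2c_v D_v$, so $\hat\Sigma^+\le\hat\Sigma+2\sum_v c_v D_v$. Combining, $\hat f(2\alpha^+ n)\le\hat\Sigma+2\sum_v c_v D_v-\tfrac\beta4 n$ w.h.p.; thus once we show $\sum_v c_v D_v<\tfrac\beta{16}n$ w.h.p.\ for $\lambda$ large, we get $\hat f(2\alpha^+ n)<\hat\Sigma$, hence $\hat j_n\le(1-2\alpha^+)n$, and $\hat\alpha:=2\alpha^+$ works. (The same estimates give $\hat\Sigma\ge\tfrac\beta8 n>0$ w.h.p., so there is no degenerate case.)

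The remaining bound on $\sum_v c_v D_v$ is the one genuinely new estimate and the main obstacle, since $c_v$ depends on the random matching rather than only on the i.i.d.\ data; concretely, one must rule out that the edges deleted into $H_n^c$ disproportionately strike the high-degree vertices carrying most of the Molloy--Reed sum. I would reveal the passage times first, which fixes $H_n^c$ and gives $\EE_n\le\epsilon n$ w.h.p.\ with $\epsilon=\epsilon(\lambda)\to 0$ by \thref{lem:En}, and then reveal the matching of the $\EE_n$ half-edges incident to $H_n^c$. Writing $b_g$ for the degree of the vertex at the partner of such a half-edge $g$, one has $\sum_v c_v D_v\le\sum_{g}b_g$, a sum of $\EE_n$ entries drawn \emph{without replacement} from the multiset of half-edge endpoint degrees, whose empirical mean $\sum_v D_v^2/\sum_v D_v$ tends to $E[D^2]/E[D]<\infty$ w.h.p.\ (a ratio of laws of large numbers). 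Hence $\E[\sum_v c_v D_v\mid H_n^c]\le (1+o(1))\tfrac{E[D^2]}{E[D]}\EE_n\le 2\tfrac{E[D^2]}{E[D]}\epsilon n$ on the likely event, and a concentration bound for sampling without replacement --- for instance Chebyshev, with the conditional variance bounded by $\EE_n\sum_v D_v^3/\sum_v D_v=o(n^2)$ w.h.p.\ (using $\sum_v D_v^2=O(n)$ and $D_{\max}=o(n^{1/2}\log n)$, both w.h.p.\ since $E[D^2]<\infty$), or Hoeffding--Serfling, and the non-positive correlation of a uniform matching --- upgrades this to $\sum_v c_v D_v\le 3\tfrac{E[D^2]}{E[D]}\epsilon n$ w.h.p. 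Taking $\lambda$ large enough that $3\tfrac{E[D^2]}{E[D]}\epsilon<\tfrac\beta{16}$ completes the proof.
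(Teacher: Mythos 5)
Your proposal is correct in substance but follows a genuinely different route from the paper's. The paper never leaves the degree-ordered prefix sum: it bounds the damage that deleting at most $2\EE_n$ half-edges can do to $\sum_{i\le j_n}D_{\pi_i}(D_{\pi_i}-2)$ by $2D_{\pi_{j_n}}-3$ per half-edge, repairs it using the block of indices in $[(1-\alpha)n,(1-4\alpha/10)n]$, whose degrees are all at least $D_{\pi_{j_n}}\ge 3$, and charges $-1$ for each of the at most $2\EE_n$ positions by which the order statistics can be pushed back; since damage and repair both scale linearly in $D_{\pi_{j_n}}$, nothing beyond \thref{lem:jn} and \thref{lem:En} is needed. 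You instead compare $\hat{\mathcal D}_n$ with the i.i.d.\ surrogate $D_v\ind{O_v}$, rerun the \thref{lem:lln}--\thref{lem:jn} argument for the surrogate (legitimate, since the $O_v$ depend only on the local passage times and so are independent of everything else), and then must control $\sum_v c_vD_v$, the total degree of the open endpoints of the deleted edges. This is the price of passing through the global sums $\hat\Sigma,\hat\Sigma^+$ rather than the prefix: you need a new estimate on the uniform matching (conditional mean about $(E[D^2]/E[D])\,\EE_n$, plus concentration), which is exactly what the paper's bookkeeping is designed to avoid; in exchange, your route makes transparent why the deletions cannot concentrate on high-degree vertices, and it reuses the i.i.d.\ machinery instead of the somewhat delicate index-shifting argument.

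Two steps need tightening, neither fatal. First, the identity ``prefix sum at $j$ equals $\Sigma-f(n-j)$'' is not exact, because \eqref{eq:mr-def} orders by degree while $f$ orders by the values $d(d-2)$ (degrees $0,1,2$ scramble the two orders); only the inequality $\text{prefix}(j)\ge\Sigma-f(n-j)$ holds, but that is the direction you use, so $\hat f(2\alpha^+n)<\hat\Sigma$ still forces $\hat j_n\le(1-2\alpha^+)n$. Second, the partner degrees of the closed half-edges under a uniform matching are not negatively correlated as asserted: two closed half-edges may be matched to each other, which produces a positive covariance term. Summed over pairs, that contribution is at most $\big(\sum_{v\in H_n^c}D_v^2\big)^2/\sum_v D_v=O(n)$ w.h.p., and the sampling-without-replacement corrections are also $O(n)$, so the conditional variance is still $o(n^2)$ and your Chebyshev step closes --- but it requires this direct second-moment computation rather than an appeal to negative correlation or to Hoeffding--Serfling, whose deviation scale $D_{\max}\sqrt{\EE_n}$ is too large here.
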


\begin{proof}
    The idea of the proof is to consider the damage, i.e., worst case minimizing effects, on $\sum_{i=1}^{j_n} D_{\pi_i} (D_{\pi_i}-2)$ after deleting up to $2\mathcal E_n$ (defined at \eqref{eq:mr-def}) half-edges to form the $\hat D_{\hat \pi_i}$. We then repair that damage by extending the range of the sum to include some larger degree terms. Such terms are available since \thref{lem:jn} ensures that $j_n \leq (1-\alpha) n$ for some $\alpha >0$ and  \thref{lem:En} allows us to control the number of half-edges of closed vertices $\mathcal E_n$. 
    
    To be more specific, the argument goes in four stages: Stage one shows that the damage from removing half-edges from  vertices $\pi_1,\hdots , \pi_{j_n}$ is at most a constant times $D_{\pi_{j_n}} \EE_n$. Stage two quantifies the repairing effect of including vertices $\pi_i$ for $(1-\alpha)n < i \leq (1-\alpha)n + C\EE_n$. If the first $(1-\alpha)n+C\EE_n$ vertices of $\hat \pi$ are the same as the first $(1-\alpha)n+C\EE_n$ vertices of $\pi$, that is, no vertex after $i> (1-\alpha)n+ C\EE_n$ in $\pi$ is permuted to the front in $\hat \pi$, then the first two stages are sufficient. In stage three we note that, the index of the first $(1-\alpha)n + C\EE_n$ vertices could be pushed back by at most $2\EE_n$. In the worst case, the additional vertices may have degrees $\hat D_i = 1$ (and so $\hat D_i(\hat D_i -2) = -1$). Lastly, we combine these observations and use the fact that $\EE_n$ can be made small to prove the claimed result. Now we provide the details. 

    It follows from \thref{lem:jn} that there exists $\alpha >0$ such that $j_n \leq (1-\alpha)n$ with high probability. As guaranteed by \thref{lem:En}, let $\lambda>0$ be such that $\EE_n < (\alpha/10) n $ with high probability. The total number of half-edges removed is at most $2\EE_n$ where the maximum is attained when each closed half-edge is attached to an open half-edge. We proceed by assuming the occurrence of the event $\{j_n<(1-\alpha)n\} \cap \{\mathcal E_n < (\alpha/10)n\}$, which has high probability.
    
    For each half-edge attached to vertices $\pi_1, \pi_2 \dots, \pi_{j_n}$, the reduction to the quantity
    \begin{equation} \label{eq:jn-sum}
        \sum_{i=1}^{j_n} D_{\pi_i}(D_{\pi_i}-2) >0
    \end{equation}
    by removing a single half-edge attached to a vertex with degree $d$ before the removal occurs is $d(d-2)-(d-1)(d-3) = 2d-3$. As the terms are ordered by size, the maximal reduction occurs when $d=D_{\pi_{j_n}}$. Since $d(d-2)$ is non-positive for $d=0,1,2$, $D_{\pi_{j_n}} \ge 3$. Since there are at most $2\EE_n$ edges removed, the sum \eqref{eq:jn-sum} is reduced by at most 
    $$ 2(2D_{\pi_{j_n}}-3) \EE_n < (4D_{\pi_{j_n}}-6)\frac \alpha {10} n.$$
    Then,
    \begin{equation}\label{eq:D-sum-1}
        \sum_{i=1}^{j_n}\hat D_{\pi_i}(\hat D_{\pi_i-2}) \ge \sum_{i=1}^{j_n} D_{\pi_i}( D_{\pi_i-2}) - (4D_{\pi_{j_n}}-6)\frac \alpha {10} n.
    \end{equation}
    
    We next consider the vertices between indices $(1-\alpha)n$ and $(1-(4\alpha/10))n$. As $\EE_n < (\alpha/10)n$, there are at most $2\EE_n<(2\alpha/10)n$ half-edges removed from these vertices. Then, there are at least $(6\alpha/10) n D_{\pi_{j_n}}  - (2\alpha/10)n$ half-edges attached to vertices between indices $(1-\alpha)n$ and $(1-(4\alpha/10))n$ after removal.
    
    We lower bound the contribution of vertices between indices $(1-\alpha)n$ and $(1-(4\alpha/10))n$ by constructing from scratch a degree sequence that minimizes the sum. Each additional half-edge contributes to the sum by $2d-3$ where $d$ is the number of the half-edges of the vertex after the addition. Since $2d-3$ is increasing,  the sum $\sum_{i=(1-\alpha)n}^{(1- 4\alpha/10)n} \hat D_{\pi_i}(\hat D_{\pi_i}-2)$ is least when every half-edge is added to the vertex of the least degree. Then, there are at least $(6\alpha/10) n D_{\pi_{j_n}}  - (2\alpha/10)n -2(6\alpha/10)n= (6D_{\pi_{j_n}}-14)(\alpha/10) n$ half-edges added as the third or higher edge of their vertex, each contributing at least $2\times 3 - 3 = 3$ to the sum. Then,
    \begin{equation} \label{eq:D-sum-2}
        \sum_{i=\lceil(1-\alpha)n\rceil}^{\lfloor(1- 4\alpha/10)n\rfloor} \hat D_{\pi_i}(\hat D_{\pi_i}-2) \ge (18D_{\pi_{j_n}}-42)\frac \alpha {10} n.
    \end{equation}
    
    Finally, to account for the scenario that the last $2\mathcal E_n$ edges suffer edge removal and become degree-$0$ edges, for example, we need to consider the case where the first $j_n$ edges are ``pushed back''. There are at most $2\EE_n< (2\alpha/10)n$ vertices after index $(1-(4\alpha/10)n)$ that can be permuted to an earlier index in $\hat \pi$ which could push the index of $\pi_i$ back by at most $(2\alpha/10)n$. In the worst case, all such vertices have degree $1$ after edge removal. Then, by the above, \eqref{eq:D-sum-1}, \eqref{eq:D-sum-2}, and $D_{\pi_{j_n}}\ge 3$, we have
    \begin{align*}
        &\sum_{i=1}^{\lfloor(1-(2\alpha/10)n)\rfloor} 
        \hat D_{\hat \pi_i} (\hat D_{\hat \pi_i}-2) \\
        &\ge \sum_{i=1}^{j_n} \hat D_{
        \hat \pi_i} (\hat D_{\hat \pi_i}-2) + \sum_{i=\lceil(1-\alpha)n\rceil}^{\lfloor(1-(4\alpha/10))n\rfloor} \hat D_{\pi_i} (\hat D_{\pi}-2) + (-1) (2\alpha/10)n \\
        &\ge \sum_{i=1}^{j_n} D_{\pi_i}( D_{\pi_i-2}) - (4D_{\pi_{j_n}}-6)\frac \alpha {10} n + (18D_{\pi_{j_n}}-42)\frac \alpha {10} n - 2 \frac \alpha {10} n \\
        &\ge \sum_{i=1}^{j_n} D_{\pi_i}( D_{\pi_i-2}) + (14 D_{\pi_{j_n}}- 38) \frac \alpha {10}n >0.
    \end{align*}
    Letting $\hat \alpha = \alpha/5$, it follows that $\hat j_n \le (1-\hat \alpha)n$ with high probability.
\end{proof}

\begin{proposition}\thlabel{prop:Hn}
There exist $\lambda,\delta>0$ such that \eqref{eq:Hgiant} holds. 
\end{proposition}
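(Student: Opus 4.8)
The idea is to run the modified degree sequence $\hat\D_n$ from \thref{lem:law} through the giant‑component criterion \thref{lem:giant} (the restatement of \cite[Theorem 2]{JPRR18}). Since \thref{lem:law} says that $H_n$ is distributed as the configuration model on $\hat\D_n$, it suffices to produce $\lambda>0$ and a fixed $\epsilon>0$ for which $\hat\S_n\ge\epsilon\,\hat\M_n$ with high probability, where $\hat\S_n$ and $\hat\M_n$ are formed from $\hat\D_n$ exactly as $\mathcal S_n,\mathcal M_n$ are formed from $\mathcal D_n$ in \eqref{eq:mr-def}; that is, $\hat\S_n=\sum_{i=\hat j_n}^n \hat D_{\hat\pi_i}$ and $\hat\M_n=\sum_{\hat D_i\neq 2}\hat D_i$. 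The giant component of $H_n$ that \thref{lem:giant} then supplies immediately gives \eqref{eq:Hgiant}.

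\textbf{Choosing $\lambda$ and bounding $\hat\S_n$ below.} Fix $\lambda$ large enough that \thref{lem:hatj} provides $\hat\alpha>0$ with $\hat j_n\le(1-\hat\alpha)n$ with high probability and simultaneously, by \thref{lem:En}, that $\EE_n<\epsilon' n$ with high probability, where $\epsilon'>0$ is a small constant to be chosen (depending only on $E[D]$). Because the order statistics $\hat D_{\hat\pi_i}$ are nondecreasing in $i$ and $\hat j_n\le(1-\hat\alpha)n$, we get $\hat\S_n\ge\sum_{i=\lceil(1-\hat\alpha)n\rceil}^n \hat D_{\hat\pi_i}$, a sum of at least $\hat\alpha n$ of the largest entries of $\hat\D_n$. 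Since the average of the $k$ largest of $n$ numbers is at least their overall average, this is at least $\hat\alpha\sum_{i=1}^n\hat D_i$. Now $\sum_{i=1}^n\hat D_i$ is exactly the number of half‑edges on open vertices that are matched (in the configuration‑model pairing) to another open half‑edge; since the closed vertices carry $\EE_n$ half‑edges in total and each can be matched to at most one open half‑edge, $\sum_{i=1}^n\hat D_i\ge\sum_{i=1}^n D_i-2\EE_n-O(1)$, the $O(1)$ absorbing the $\b$‑edge and the possible parity half‑edge at $n$. Combining with the law of large numbers for $\sum_i D_i$ and $\EE_n<\epsilon' n$, and using $0<E[D]<\infty$ (which follows from \eqref{eq:mr}), we obtain $\sum_i\hat D_i\ge\tfrac12 E[D]\,n$ with high probability once $\epsilon'$ is small, hence $\hat\S_n\ge\tfrac{\hat\alpha}{2}E[D]\,n$.

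\textbf{Bounding $\hat\M_n$ above and concluding.} Trivially $\hat\M_n\le\sum_i\hat D_i\le\sum_i D_i\le 2E[D]\,n$ with high probability by the law of large numbers. Putting the two estimates together, $\hat\S_n\ge\tfrac{\hat\alpha}{4}\,\hat\M_n$ with high probability, so \thref{lem:giant}, applied to the configuration model with degree sequence $\hat\D_n$ (which by \thref{lem:law} is the law of $H_n$), yields $\delta>0$ with $\P_\lambda(|H_n^{(1)}|\ge\delta n)\to 1$. In particular $\limsup_{n\to\infty}\P_\lambda(|H_n^{(1)}|>\tfrac{\delta}{2}n)>0$, which is exactly \eqref{eq:Hgiant}.

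\textbf{Main obstacle.} The genuine difficulty — that deleting the closed half‑edges does not push $\hat j_n$ past a linear fraction of $n$ — has already been dealt with in \thref{lem:hatj}, so this proposition is essentially an assembly step and I anticipate no serious obstruction. The one place requiring a little care is the lower bound on $\hat\S_n$: one must convert control of $\hat j_n$ alone into a linear lower bound on the sum of the largest entries of $\hat\D_n$, which is where the elementary ``top‑$k$ average dominates the global average'' inequality and the half‑edge count $\sum_i\hat D_i\ge\sum_i D_i-2\EE_n-O(1)$ come in; the only bookkeeping is tracking the ceilings and floors in the order‑statistic indices, which is harmless as $n\to\infty$.
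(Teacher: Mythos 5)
There is one genuine gap: you apply the giant-component criterion (\thref{lem:giant}, i.e.\ the restatement of \cite[Theorem 2]{JPRR18}) directly to the configuration-model multigraph $H_n$ with degree sequence $\hat\D_n$ and conclude $\P_\lambda(|H_n^{(1)}|\ge\delta n)\to1$. But that criterion is a statement about a \emph{uniformly random simple graph} with a given degree sequence, and the graph produced by the configuration model may have self-loops and multiple edges. The paper must therefore pass through simplicity: since $E[D^2]<\infty$, $\liminf_n\P_\lambda(G_n\text{ is simple})>0$ (Janson), the configuration model conditioned on being simple is uniform over simple graphs with that degree sequence, and $H_n$ is simple whenever $G_n$ is; the paper then works along a (sub)sequence of simple realizations and only concludes the positive-$\limsup$ statement \eqref{eq:Hgiant}, which is exactly why \eqref{eq:Hgiant} is phrased with a $\limsup$ and positive probability rather than with high probability. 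Your stronger ``whp'' conclusion is not justified by the tools in the paper, and without addressing simplicity the invocation of \thref{lem:giant} for $H_n$ is not licensed. The fix is short (condition on, or restrict to, the positive-probability event of simplicity before invoking \cite{JPRR18}), but it is a step the paper needs and your write-up omits.

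Apart from that, your assembly is the same as the paper's: sample $H_n$ as a configuration model on $\hat\D_n$ (\thref{lem:law}), use \thref{lem:hatj} to get $\hat j_n\le(1-\hat\alpha)n$, control $\hat\M_n$ by the law of large numbers, and verify $\hat\S_n\ge\epsilon\hat\M_n$. Your lower bound on $\hat\S_n$ differs slightly and is valid: you use the ``top-$k$ average dominates the global average'' inequality together with the half-edge count $\sum_i\hat D_i\ge\sum_i D_i-2\EE_n-O(1)$ and \thref{lem:En}, whereas the paper argues more directly that every order statistic with index at least $\hat j_n$ satisfies $\hat D_{\hat\pi_i}\ge3$, giving $\hat\S_n\ge3\hat\alpha n$ without any further appeal to $\EE_n$ at this stage. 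Your choice of a single $\lambda$ serving both \thref{lem:hatj} and \thref{lem:En} is fine, since the open-vertex probability tends to $1$ as $\lambda\to\infty$ so both statements hold for all sufficiently large $\lambda$.
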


\begin{proof}
    By \thref{lem:law}, the subgraph $H_n$ is sampled from the configuration model with degree sequence $\hat{\mathcal D}_n$. It follows from \cite{janson2009probability} that 
    $$\liminf_{n \geq 1} \P_\lambda(G_n \text{ is simple})  >0.$$
    Since the $G_n$ are sampled independently of one another, there is almost surely a random increasing subsequence $n_1, n_2,\hdots$ such that  for $k\geq 1$ each $G_{n_k}$ is simple. As $H_{n_k}$ is a subgraph, the $H_{n_k}$ are also simple. 

    \thref{lem:hatj} implies that for $\lambda$ large enough there exists $\hat \alpha >0 $ with $\hat j_{n_k} < (1-\hat \alpha)n_k$ with high probability. 
    Since $E [D_i] = \mu < \infty$, the law of large numbers ensures that $\M_{n_k} \leq 2\mu n_k$ with high probability. Together, these observations imply 
    \begin{align}
    \hat \S_{n_k} \geq \sum_{ i= \lceil (1-\hat \alpha)n_k\rceil}^{n_k}\hat D_{\hat \pi_i} \geq 3 \hat\alpha n_k = \f {3\hat \alpha}{2 \mu} 2 \mu  n_k  \geq  \f {3\hat \alpha}{2 \mu}  \M_{n_k}\label{eq:og}
    \end{align}
    with high probability. The second inequality is true because there are at most $\hat \alpha n_k+1$ indices and $\hat D_{\hat \pi_i}\ge 3$ for all $i\ge \hat j_{n_k}$. Setting $\epsilon = 3\hat\alpha / (2 \mu)$ in \thref{lem:giant}, we conclude that for $\lambda$ large enough there exists $\delta>0$ such that $|H_{n_k}^{(1)}| > \delta n_k$ with high probability (as $k \to \infty$).
    Since \eqref{eq:Hgiant} is defined with a $\limsup$, a result for the subsequence is enough to infer \eqref{eq:Hgiant}. 
\end{proof}

\begin{proof}[Proof of \thref{thm:main}]
That $\lambda_c(\G)<\infty$ follows immediately from \thref{lem:giant} and \thref{prop:Hn}. That $\lambda_c(\G) \geq \Lambda$ follows from  \eqref{eq:finE}, which is proven in the next section.
\end{proof}





\section{Proof of \thref{thm:ER}}
\label{sec:ER}

\begin{proof}[Proof of the upper bound \eqref{eq:finE}]

We sample a graph $G_n$ from the configuration model with degree sequence $\D_n$. Let $\Gamma_k$ be the set of all vertex self-avoiding paths of length $k$ starting at 1 that are present in $G_n$. Interpret $\Gamma_0 = \{1\}$ as the path of length 0 starting at 1.  We say that red \textit{survives} on a path $\gamma \in \Gamma_k$ if, for chase-escape restricted only to the passage times along $\gamma$, the terminal vertex of $\gamma$ is eventually colored red. We emphasize that survival along $\gamma$ ignores the influence of red and blue from all edges not belonging to $\gamma$ and only depends on the passage times along $\gamma$. 
 
Let $A_k = A_k(\lambda)$ be the event that $k$ is ever colored red in chase-escape on the infinite path $0,1,2,\hdots$ with $0$ initially blue and $1$ initially red. Observe that $$\P_\lambda(\text{red survives on a path $\gamma$ of length $k$}) = \P_\lambda(A_k).$$ By \cite[Lemma 2.2]{DJT}, for $\lambda < 1$ and all $k \geq 1$.
\begin{equation}
\P_\lambda (A_k) \leq C_\lambda \left( \frac{4 \lambda}{(1+ \lambda )^2}\right)^kk^{-3/2} \label{eq:Ak}
\end{equation}
with 
\begin{align}
    C_\lambda = \sum_{i=0}^\infty (2i +1) \lambda ^i = \f {1+\lambda} { (1- \lambda)^2}. \label{eq:Cl}
\end{align}

For any vertex $v \in \R_n$, it is required that there is a path red survives on with $v$ the terminal point. Hence,
\begin{equation}
\label{3}
|\R_n| \leq  \sum^n_{k=0} \sum_{\gamma \in \Gamma_k} \ind{\text{red survives on } \gamma}.
\end{equation}
Taking expectation and using the fact that $\Gamma_k$ is independent of the identically distributed $\ind{\text{red survives on $\gamma$}}$ gives
\begin{align}
    \E_\lambda [|\R_n|] \leq \sum_{k=0} \E_\lambda [|\Gamma_k|] \P_\lambda(A_k). \label{eq:ER}
\end{align}

The quantity $\P_\lambda(A_k)$ is bounded at \eqref{eq:Ak}. A standard branching process construction (see \cite{bollobas2015old} for example) shows that the total number of paths of length $k$ is dominated by a branching process in which the root has $D_1$ children and subsequent generations have offspring distribution $D^*-1$ where $D^*$ is the size-biased distribution of $D$. Namely, $P(D^* = i) = i P(D=i)/ E [D]$ for $i \geq 1$. One easily checks that $E [D^*] = E [D^2] / E [D] = a+1$ with $a$ defined at \eqref{eq:Lambda}. It follows that 
$\E_\lambda [|\Gamma_k|]\leq E[D] a^{k-1}.$ Note that by the condition at \eqref{eq:mr}, we must have $a >1$.

Applying these bounds to \eqref{eq:ER} yields
\begin{align}
    \E_\lambda [|\R_n|]& \leq 1+  \sum_{k=1}^n E[D] a^{k-1}  C_\lambda \left( \frac{4 \lambda}{(1+ \lambda )^2}\right)^kk^{-3/2} \\
    &\leq 1 + \f{E[D] C_\lambda}a \sum_{k=1}^\infty  \left( \frac{4a \lambda}{(1+ \lambda )^2}\right)^kk^{-3/2}\label{eq:ER2}.
\end{align}
The value of $\Lambda$ at \eqref{eq:Lambda} is the solution to $a\Lambda / (1+\Lambda)^2=1$ for which $\lambda \leq \Lambda$ implies
\begin{align}
\left( \frac{4 a \lambda}{(1+ \lambda )^2}\right)\leq 1.\label{eq:l<L}
\end{align}

Applying the bound at \eqref{eq:l<L}, the easily proven inequality $\sum_{k=1}^\infty k^{-3/2} \leq  3$, and  the formula at \eqref{eq:Cl} for $C_\lambda$ to \eqref{eq:ER2} gives
\begin{align}
    \E_\lambda [| \R_n|] \leq 1 + \f{ 3 E [D]} a \left(\f {1 + \lambda }{(1-\lambda)^2} \right)\ &\leq  1 + \f{ 3 E [D]} a \left(\f {1 + \Lambda }{(1-\Lambda)^2} \right) \\
    &= 1 + \f{ 3E[D]^2}{E[D^2] -E[D]}\left(\f {1 + \Lambda }{(1-\Lambda)^2} \right).
\end{align}
Setting 
$$C=1 + \f{ 3E[D]^2}{E[D^2] -E[D]}\left(\f {1 + \Lambda }{(1-\Lambda)^2} \right)$$
gives \eqref{eq:finE}. Note that $C<\infty$ since $a>1$ implies that $\Lambda <1$. 
\end{proof}

\begin{proof}[Proof of the lower bound \eqref{eq:ERl}]
    Let $\mathcal T$ be a random tree in which the root has a $D$-distributed number of children and all other vertices have an independent $(D^*-1)$-distributed number of vertices with $D^*$ the size-biased version of $D$. So, it is equivalent to write $a$ from \eqref{eq:Lambda} as $a = E [D^*-1]$. 
    
    Suppose first that $a< \infty$. Let $\mathcal T_m$ be the first $m$ generations of the tree. It is proven in \cite[Lemma 4]{bollobas2015old} that for any fixed $m$ the subgraph $\mathbb B_n(1,m)$ of vertices within distance $m$ of $1$ in $G_n$ may be coupled to equal $\mathcal T_n$ with high probability. We will denote the coupling probability measure by $\PP$. It follows from \cite[Theorem 1.1]{bordenave2014extinction} that, for chase-escape on $\mathcal T$, red reaches infinitely many vertices with positive probability so long as $\lambda > \Lambda$. In particular, the probability that red reaches a vertex at distance $m$ is bounded below by some constant $\beta >0$.     
    Letting $C_m$ be the event that $\mathcal T_m$ and $\mathbb B_n(1,m)$ can be coupled, we then have $$\liminf_{n >0} \E_\lambda[|\R_n|] \geq \liminf_{n >0} \E_\lambda[|\R_n| \mid C_m] \PP(C_m) \geq \beta m \PP(C_m).$$  As $\beta >0$ does not depend on $m$ and $\PP(C_m) \to 1$ as $n \to \infty$ for any fixed $m$, it follows that $\liminf_{n >0} \E [|\R_n|] = \infty$. 
    
    If $a=\infty$, then we fix $L>0$ and let $\mathcal T^L$ be a randomly sampled embedded tree with truncated offspring distribution $(D^* -1) \wedge L$. Let 
    $$b_L =\limsup_{k >0} |V_k(\mathcal T^L)|^{1/k}$$
    be the branching number of $\mathcal T^L$. Since $a = \infty$ and $D^*$ is almost surely finite, we have $b_L = \E [(D^* -1) \wedge L]\to \infty$ as $L \to \infty$. 
    Thus, for any $\lambda >0$, we can choose $L$ large enough so that $\Lambda((D^*-1) \wedge L) < \lambda$, then apply \cite[Theorem 1.1]{bordenave2014extinction} and similar reasoning as the $a<\infty$ case to deduce that $\E_\lambda[|\R_n|] \to \infty$. 
\end{proof}


\bibliographystyle{amsplain} 
\bibliography{references_updated}



\end{document}